\DeclareMathOperator{\cf}{cf}
\DeclareMathOperator{\ran}{ran}
\def\MPB{{\mathbb{P}}}
\def\MCB{{\mathbb{C}}}
\def\k{\kappa}
\def\sse{\subseteq}
\def\l{\lambda}
\def\lan{\langle}
\def\ran{\rangle}
\def\a{\alpha}
\def\om{\omega}
\def\ov{\overline}
\newtheorem{theorem}{Theorem}[section]
\newtheorem{lemma}[theorem]{Lemma}
\newtheorem{corollary}[theorem]{Corollary}
\newtheorem{remark}[theorem]{Remark}
\newtheorem{notation}[theorem]{Notation}
\newtheorem{claim}[theorem]{Claim}
\numberwithin{equation}{section}
\def\MPB{{\mathbb{P}}}
\def\MCB{{\mathbb{C}}}
\def\k{\kappa}
\def\sse{\subseteq}
\def\l{\lambda}
\def\lan{\langle}
\def\ran{\rangle}
\def\a{\alpha}
\def\om{\omega}
\def\ov{\overline}
\def\l{\lambda}
\def\sse{\subseteq}
\def\lan{\langle}
\def\ran{\rangle}
\def\ov{\bar}
\def\rmark{\mbox{$\rm\bf\rule{0.06em}{1.45ex}\kern-0.05em R$}}
\def\pmark{\mbox{$\rm\bf\rule{0.06em}{1.45ex}\kern-0.05em P$}}
\def\nmark{\mbox{$\rm\bf\rule{0.06em}{1.45ex}\kern-0.05em N$}}
\def\vdash{\mbox{$\rm\| \kern-0.13em -$}}
\newcommand{\lusim}[1]{\smash{\underset{\raisebox{1.2pt}[0cm][0cm]{$\sim$}}
{{#1}}}}
\def\l{\lambda}
\def\sse{\subseteq}
\def\lan{\langle}
\def\ran{\rangle}
\def\ov{\bar}
\def\rmark{\mbox{$\rm\bf\rule{0.06em}{1.45ex}\kern-0.05em R$}}
\def\pmark{\mbox{$\rm\bf\rule{0.06em}{1.45ex}\kern-0.05em P$}}
\def\nmark{\mbox{$\rm\bf\rule{0.06em}{1.45ex}\kern-0.05em N$}}
\def\vdash{\mbox{$\rm\| \kern-0.13em -$}}
\begin{document}

\title[Adding a lot of Cohen reals by adding a few II]{Adding a lot of Cohen reals by adding a few II }

\author[M. Gitik and M. Golshani]{Moti Gitik and Mohammad Golshani}

\thanks{} \maketitle




\begin{abstract}
We study pairs $(V, V_{1})$,  $V \subseteq V_1$,  of models of $ZFC$  such that adding $\kappa-$many Cohen reals over $V_{1}$ adds $\lambda-$many Cohen reals over $V$ for some $\lambda> \kappa$.
\end{abstract}
\maketitle

\section{Introduction}
We continue our study from [3].  We study pairs $(V, V_{1})$,  $V \subseteq V_1$,  of models of $ZFC$ with the same ordinals, such that adding $\kappa-$many Cohen reals over $V_{1}$ adds $\lambda-$many Cohen reals over $V$ for some $\lambda> \kappa$\footnote{By ``$\lambda-$many Cohen reals'' we mean "a generic object $\langle s_\a : \a < \l\rangle$ for the poset $\mathbb{C}(\l)$ of finite partial
functions from $\l\times\omega$ to $2$''.}. We are mainly interested when $V$ and $V_{1}$ have the same cardinals and reals. We prove that for such models, adding $\kappa-$many Cohen reals over $V_{1}$ cannot produce more Cohen reals over $V$ for $\kappa$ below the first fixed point of the $\aleph-$function, but the situation at the first fixed point of the $\aleph-$function is different. We also reduce the large cardinal assumptions from [1, 3] to the optimal ones.

\section{Adding many Cohen reals by adding a few: a general result }

In this section we prove the following general result.
\begin{theorem}
Suppose $\kappa < \lambda$ are infinite (regular or singular) cardinals, and let $V_1$
be an  extension of $V.$ Suppose that in $V_1:$

$(a)$ $\kappa < \lambda$ are still infinite cardinals\footnote{$\lambda$ can be a regular or a singular cardinal, but by $(b)$, $\k$ is necessarily a singular cardinal of cofinality $\omega$.},

$(b)$ there exists an increasing sequence $\langle \kappa_n: n < \omega  \rangle$ of regular cardinals, cofinal in $\kappa.$ In particular $cf(\kappa) = \omega,$

$(c)$ there is an increasing (mod finite) sequence $\langle
f_{\alpha}: \alpha < \lambda \rangle$ of functions in the product

$\hspace{.5cm}$$\prod_{n< \omega}(\kappa_{n+1}\setminus\kappa_n),$

$(d)$ there is a splitting $\langle S_{\sigma} : \sigma<\kappa \rangle$ of $\lambda$ into sets of size $\lambda$ such
that for every countable

$\hspace{.5cm}$set $I \in V$ and  every $\sigma<\kappa$ we have $|I\cap S_{\sigma}|< \aleph_0.$

Then adding $\kappa-$many Cohen reals over $V_1$ produces $\lambda-$many Cohen reals over $V.$
\end{theorem}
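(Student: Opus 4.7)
The plan is to define, in the extension $V_1[G]$ by the $\MCB(\kappa)^{V_1}$-generic $G \colon \kappa \times \omega \to 2$, the sequence
$$
r_\alpha(n) := G(f_\alpha(n), n) \oplus G(\sigma(\alpha), n), \qquad \alpha < \lambda, \ n < \omega,
$$
where $\sigma(\alpha)$ is the unique $\sigma < \kappa$ with $\alpha \in S_\sigma$. The term $G(f_\alpha(n), n)$ uses the scale of $(c)$ to carve $\lambda$-many ``diagonal'' threads out of the $\kappa$-many columns of $G$, with distinctness coming from the $<^*$-property; the term $G(\sigma(\alpha), n)$ is an independent ``tag'' from the splitting of $(d)$ that is essential for realizing arbitrary target values. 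Note that $\MCB(\lambda)^V$ and $\MCB(\lambda)^{V_1}$ are the same poset (finite partial functions from $\lambda \times \omega$ to $2$, an absolute notion), so any dense open $D \in V$ remains dense in $V_1$, and density arguments may be carried out inside $V_1$.

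To prove $\ordered{r_\alpha : \alpha < \lambda}$ is $\MCB(\lambda)^V$-generic over $V$, fix $D \in V$ dense open and $p \in \MCB(\kappa)^{V_1}$ with $\supp(p) \sse \kappa_N \times N$ and first-coordinate projection $\Sigma_0$; we seek $p' \le p$ and $q \in D$ with $p' \forces q \sse \ordered{\dot r_\alpha : \alpha < \lambda}$. The constraint $r_\alpha(n) = q(\alpha, n)$ becomes the $\mathbb{F}_2$-linear equation $G(f_\alpha(n), n) \oplus G(\sigma(\alpha), n) = q(\alpha, n)$, compatible with $p$ whenever at least one of its two $G$-coordinates is not decided by $p$. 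Here $(d)$ is decisive: by ccc of $\MCB(\lambda)^V$, refine $D$ in $V$ to a countable maximal antichain $A$, and set $I_A := \bigcup_{q \in A} F_q$ (with $F_q$ the $\alpha$-support of $q$), a countable set of $V$. Hypothesis $(d)$ then forces $B := I_A \cap \bigcup_{\sigma \in \Sigma_0} S_\sigma$ to be finite (a finite union of finite intersections). Inside $V_1$, construct a finite preliminary condition $q^* \in \MCB(\lambda)$ supported on $B \times N$ that pins down the parity at the finitely many positions where $p$ decides both $G$-coordinates of the equation. By maximality of $A$, there exists $a \in A$ compatible with $q^*$; the common extension $q := q^* \cup a$ lies in $D$ and has $F_q \sse I_A$. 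For $\alpha \in F_q \setminus B$ we have $\sigma(\alpha) \notin \Sigma_0$, so $G(\sigma(\alpha), n)$ is free; for $\alpha \in B$ and $n \ge N$ freeness also holds since $(\sigma(\alpha), n) \notin \supp(p)$; and for $\alpha \in B$ and $n < N$ the preliminary condition $q^*$ has already handled the parity.

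The main technical obstacle is ensuring that these individually solvable equations are \emph{simultaneously} solvable. Two equations at $(\alpha, n)$ and $(\beta, n)$ share the variable $G(\sigma, n)$ precisely when $\sigma(\alpha) = \sigma(\beta) = \sigma$, forcing the consistency requirement
$$
q(\alpha, n) \oplus q(\beta, n) = G(f_\alpha(n), n) \oplus G(f_\beta(n), n),
$$
which is freely adjustable only when $f_\alpha(n) \ne f_\beta(n)$. The $<^*$-property of $(c)$ provides this separation for all sufficiently large $n$, so the small-$n$ collisions must be absorbed into $q^*$ by pre-declaring the corresponding agreements of $q$-values on each $S_\sigma$-class appearing in $I_A$. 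Verifying that this enlarged $q^*$ is still compatible with some $a \in A$---and that the resulting $q$ admits a global linear solution extending $p$---is where $(c)$ and $(d)$ interact most essentially. Granting this book-keeping, $p'$ is obtained from $p$ by adjoining the resolved $G$-values arising from the linear solution, yielding $p' \forces q \sse \ordered{\dot r_\alpha : \alpha < \lambda}$ as required.
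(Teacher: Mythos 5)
There is a genuine gap, and it sits exactly where you flag it: the ``simultaneous solvability'' of the $\mathbb{F}_2$-system is not book-keeping but the entire content of the theorem, and your construction has no mechanism to make it go through. The problem is that your definition $r_\alpha(n)=G(f_\alpha(n),n)\oplus G(\sigma(\alpha),n)$ creates \emph{outright forced} linear relations among the $r_\alpha$'s that no choice of $p'$ can undo: whenever $\alpha\neq\beta$ lie in the same $S_\sigma$ and $f_\alpha(n)=f_\beta(n)$ (which hypothesis $(c)$ permits for finitely many, but nonzero, $n$ per pair --- the sequence is only increasing \emph{mod finite}), the two equations share both variables and $r_\alpha(n)=r_\beta(n)$ is forced by the empty condition; similarly $r_\alpha(n)=0$ is forced when $f_\alpha(n)=\sigma(\alpha)$, and longer cycles arise from collisions $f_\alpha(n)=\sigma(\beta)$. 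Your plan is to absorb these into a preliminary condition $q^*$ and then pick $a\in A$ compatible with $q^*$, but this is circular: which positions are constrained depends on $\mathrm{dom}(a)$, and $a$ is only determined \emph{after} $q^*$ is fixed. Nor can you pre-handle all constraints inside $I_A\times\omega$: although $I_A\cap S_\sigma$ is finite for each $\sigma$, the countable set $I_A$ generally meets infinitely many $S_\sigma$'s, so the total set of constrained positions in $I_A\times\omega$ can be infinite, and no finite $q^*$ covers it. The values $a$ takes at constrained positions outside $\mathrm{supp}(q^*)$ may simply violate the forced relations, and then no $p'\leq p$ forces $a\subseteq\langle \dot r_\alpha\rangle$.

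The paper's proof avoids this entirely by building in a \emph{generic shift}: it splits the $\kappa$ Cohen reals as $\mathbb{C}(\kappa\times\kappa)\times\mathbb{C}(\kappa)$, sets $k_\alpha=\min\{k: r'_{\sigma(\alpha)}(k)=1\}$, and defines $s_\alpha(n)=r_{f_\alpha(n+k_\alpha),\,\sigma(\alpha)}(0)$. Given $b\in D$ with components in $S_{\sigma_1},\dots,S_{\sigma_m}$, one first extends on the $\mathbb{C}(\kappa)$ side to force each $k_{\alpha_i}\geq n^*$, where $n^*$ is a separation point past which the finitely many relevant $f_{\alpha}$'s are pairwise distinct everywhere; after the shift, \emph{all} read positions are distinct coordinates of the generic, so any $b\in D$ whatsoever can be planted, with no consistency conditions on $b$ and no linear algebra. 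Your XOR tag $G(\sigma(\alpha),n)$ cannot substitute for this, because it is shared by every $\alpha\in S_\sigma$ and hence cancels in precisely the collisions that cause trouble. If you want to salvage your approach, you need some analogue of the paper's device --- a finite, generically chosen re-indexing (per $\sigma$) that pushes the reads past the separation point --- and once you add that, the XOR term becomes unnecessary.
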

\begin{remark}
Condition $(c)$ holds automatically for $\lambda=\kappa^+$; given any collection $\mathcal{F}$ of $\kappa$-many elements of $\prod_{n<\omega}(\kappa_{n+1}\setminus\kappa_n),$ there exists $f$ such that for each $g \in \mathcal{F}, f(n)> g(n)$ for all large $n$ \footnote{ To see this let $\mathcal{F}=\bigcup_{n<\omega}\mathcal{F}_n,$ where $\mathcal{F}_0 \subseteq \mathcal{F}_1 \subseteq ...$ and $|\mathcal{F}_n|< \k_{n+1},$ and define $f$ so that $\sup\{g(n): g\in \mathcal{F}_n    \} < f(n)\in \k_{n+1}\setminus \k_n$.}. Thus we can define by induction on $\alpha< \kappa^+$, an increasing (mod finite) sequence $\langle
f_{\alpha}: \alpha < \kappa^+ \rangle$ in $\prod_{n<\omega}(\kappa_{n+1}\setminus\kappa_n)$\footnote{ Let $f_0$ be arbitrary. Given $\a<\k^+,$ we can apply the above to find $f_\a$ so that $f_\a(n) > f_\beta(n),$ for all large $n,$ and all $\beta<\a$.}.
\end{remark}

\begin{proof}
Force to add
$\k-$many Cohen reals over $V_1$. Split them into $\lan r_{i,\sigma} : i,\sigma<\k\ran$ and $\lan r'_{\sigma} : \sigma<\k\ran$.
Also in $V,$ split  $\kappa$  into $\kappa-$blocks $B_{\sigma}, \sigma<\kappa,$ each of size $\kappa,$ and  let $\lan f_\a : \a<\l\ran\in V_{1}$ be an
increasing (mod finite) sequence in $\prod_{n<\om}(\k_{n+1}
\setminus \k_{n})$. Let $\a<\l$. We define a real $s_\a$ as
follows. Pick $\sigma<\kappa$ such that $\a\in S_{\sigma}.$ Let $k_{\a}=min\{k<\omega: r'_{\sigma}(k) \}=1$ and set

\begin{center}
$\forall n<\om$, $s_\a(n)=r_{f_\a(n+k_{\a}),\sigma}(0)$.
\end{center}

The following lemma completes the proof.

\begin{lemma} $\lan s_\a:\a<\l\ran$ is a sequence of
$\l-$many Cohen reals over $V$.
\end{lemma}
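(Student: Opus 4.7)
The plan is to verify that $\lan s_\a : \a < \l\ran$ is $\mathbb{C}^V(\l)$-generic over $V$ by the density criterion: for every dense open $D \in V$ in $\mathbb{C}^V(\l)$ and every $p \in \mathbb{C}^{V_1}(\k)$, I produce $q \leq p$ and $d \in D$ with $q \Vdash d \subseteq \dot{\bar s}$, where $\dot{\bar s}$ names our sequence viewed as a function $\l \times \omega \to 2$. To localize $D$, use that $\mathbb{C}^V(\l)$ is c.c.c.\ in $V$: fix a maximal antichain $A \subseteq D$ in $V$, so $|A|^V \leq \aleph_0$, and let $X_1 \in V$ be the countable projection $\{\a < \l : \exists d \in A,\ \exists n,\ (\a, n) \in \dom(d)\}$. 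The essential appeal to hypothesis $(d)$ happens here: for every $\sigma < \k$ the set $Y_\sigma := X_1 \cap S_\sigma$ is finite.

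Now fix $p$ and let $\Sigma_0$ be the finite set of $\sigma$'s mentioned in $\supp(p)$ (either via some $r_{i,\sigma}$ or via $r'_\sigma$). For each $\sigma \in \Sigma_0$, the finiteness of $Y_\sigma$ together with the ``increasing mod finite'' property $(c)$ lets me pick $K_\sigma < \omega$ so that both (i) $K_\sigma$ exceeds every level $m$ for which some $i \in \k_{m+1} \setminus \k_m$ has $(i, \sigma, 0) \in \dom(p)$, and (ii) for every $m \geq K_\sigma$, the values $f_\a(m)$ for $\a \in Y_\sigma$ are pairwise distinct. Then extend $p$ to $p_1$ by setting $r'_\sigma(k) = 0$ for $k < K_\sigma$ and $r'_\sigma(K_\sigma) = 1$, for each $\sigma \in \Sigma_0$; this pins down $k_\sigma = K_\sigma$ without touching any $r_{i,\sigma}$.

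To finish, pick any $d \in A$. For each $\sigma \notin \Sigma_0$ that appears as the $\sigma$-label of some $\a$ in the first-coordinate projection of $\dom(d)$ (finitely many such $\sigma$), extend $r'_\sigma$ to decide $k_\sigma = K'_\sigma$ for some $K'_\sigma$ chosen large enough that the finitely many values $f_\a(n+K'_\sigma)$ arising from $\dom(d) \cap (S_\sigma \times \omega)$ are pairwise distinct. Finally extend to $q$ by declaring $r_{f_\a(n+k_\sigma),\sigma}(0) = d(\a, n)$ for each $(\a, n) \in \dom(d)$, where $\sigma$ is the unique index with $\a \in S_\sigma$. By (i), none of these new declarations clash with anything in $p$; by (ii) combined with the level-disjointness $f_\a(m) \in \k_{m+1}\setminus\k_m$, the map $(\a, n) \mapsto (f_\a(n+k_\sigma), \sigma)$ is injective on $\dom(d)$, so there is no internal clash either. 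Hence $q$ is a legitimate extension of $p$, $q \Vdash d \subseteq \dot{\bar s}$, and $d \in D$, which is the required element of the density set.

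The main obstacle is the coordinated choice of $K_\sigma$ for $\sigma \in \Sigma_0$: each $K_\sigma$ must simultaneously push the levels of $f_\a(n+K_\sigma)$ past the finitely many levels already touched by $\supp(p)$ and past the stabilization threshold that makes $\lan f_\a : \a \in Y_\sigma \ran$ pointwise distinct. Both tasks can be carried out by a single finite $K_\sigma$ only because $Y_\sigma$ is finite — which is exactly the content of hypothesis $(d)$ applied to the countable $V$-set $X_1$ witnessing $D$. Without $(d)$, the threshold in (ii) need not exist, and the argument genuinely uses all four hypotheses $(a)$--$(d)$ in concert.
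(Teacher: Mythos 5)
Your overall strategy is the paper's own: reduce to a countable index set in $V$ via the c.c.c.\ of Cohen forcing, use hypothesis $(d)$ to make the relevant intersections with the $S_\sigma$'s finite, use $(c)$ to get a level beyond which the $f_\alpha$'s are pointwise injective, and then write down an explicit condition realizing a prescribed element of $D$. The one place you go beyond the paper --- which disposes of the general starting condition with ``for simplicity suppose $p=q=\emptyset$'' --- is where your argument has a genuine gap. You ``extend $p$ to $p_1$ by setting $r'_\sigma(k)=0$ for $k<K_\sigma$ and $r'_\sigma(K_\sigma)=1$,'' but if $p$ already contains $r'_\sigma(j)=1$ for some small $j$, this $p_1$ is incompatible with $p$, not an extension of it. In that case $k_\sigma$ is forced to be at most $j$ in every extension, so you cannot push the levels $n+k_\sigma$ past the levels already touched by $\dom(p)$, and your clause (i) is unattainable. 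Then a fixed, arbitrarily chosen $d\in A$ need not be forceable below $p$ at all: $p$ may already decide $r_{f_\alpha(n+k_\sigma),\sigma}(0)$ to the wrong value, and distinct $\alpha,\alpha'\in Y_\sigma$ may satisfy $f_\alpha(n+k_\sigma)=f_{\alpha'}(n+k_\sigma)$ at low levels, forcing $s_\alpha(n)=s_{\alpha'}(n)$ while $d$ demands otherwise.

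The repair requires reversing your order of quantifiers: do not fix $d\in A$ in advance. First extend $p$ to decide $k_\sigma$ for each $\sigma\in\Sigma_0$ (however small those values are forced to be), then decide the finitely many ``entangled'' values of $\langle s_\alpha(n)\rangle$ --- those $(\alpha,n)$ with $\alpha\in Y_\sigma$ for which either $(f_\alpha(n+k_\sigma),\sigma,0)\in\dom(p)$ or $n+k_\sigma$ lies below the injectivity threshold for $Y_\sigma$. Hypothesis $(d)$ is exactly what makes this set of entangled pairs finite (this, rather than your clause (ii), is where $(d)$ is indispensable: were $X_1\cap S_\sigma$ infinite, a single coordinate of $p$ could constrain infinitely many $s_\alpha(n)$). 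The decided values form a finite condition $b_0\in\mathbb{C}(X_1)$, which lies in $V$, and by maximality of $A$ (or density of $D$ below $b_0$) you may choose $d$ compatible with $b_0$; the remaining coordinates of $d$ are then fresh and the rest of your construction goes through. As written, your proof is correct in the case the paper actually treats, but the claim that it handles arbitrary $p$ does not hold up.
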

\begin{notation} $(a)$ For a forcing notion $\mathbb{P}$ and $p,q\in \mathbb{P},$ we let $p\leq q$  mean $p$ is stronger than $q$.

$(b)$ For each set $I$, let ${\MCB}(I)$ be the Cohen
forcing notion for adding $I-$many Cohen reals. Thus
${\MCB}(I)=\{p:p$ is a finite partial function from $I\times \om$
into 2 $\}$, ordered by $p\leq q$ iff $p \supseteq q$.
\end{notation}
\begin{proof} First note that $\lan\lan r_{i,\sigma} :
i,\sigma<\k\ran, \lan r'_{\sigma} : \sigma<\k\ran\ran$ is ${\MCB}(\k\times\k)\times \MCB(\k)-$generic over $V_1$. By the $c.c.c.$ of ${\MCB}(\l)$ it
suffices to show that for  any countable set $I\sse \l$, $I\in V$,
the sequence $\lan s_\a:\a\in I\ran$ is ${\MCB}(I)-$generic over
$V$. Thus it suffices to prove the following

$\hspace{1.5cm}$ For every $(p,q)\in {\MCB}(\k\times\k)\times\MCB(\k)$
and every open dense subset $D\in V$

(*)$\hspace{1cm}$ ~~of ${\MCB}(I)$,~ there is
$(\ov{p}, \ov{q})\leq (p,q)$ such~ that $(\ov{p},\ov{q}) \vdash
\ulcorner   \lan \lusim{s}_\a : \a\in I\ran$ extends

$\hspace{1.5cm}$  some element of $D\urcorner. $

 Let $(p, q)$ and $D$ be as above and for simplicity suppose that
$p=q=\emptyset.$ Let $b\in D,$ and let $\a_1, ..., \a_m$ be an enumeration of the components of $b,$ i.e., those $\a$ such that  $(\a,n)\in dom(b)$ for some $n.$ Also let $\sigma_1, ..., \sigma_m<\k$ be such that $\a_i \in S_{\sigma_i}, i=1, ..., m.$ By $(d)$ each $I\cap S_{\sigma_i}$ is finite, thus by $(c)$ we can find $n^*<\omega$ such that for all $n\geq n^*, 1\leq i \leq m$ and $\a^*_1<\a^*_2$ in $I\cap S_{\sigma_i}$ we have
$f_{\a^*_1}(n)<f_{\a^*_2}(n).$ Let
\begin{center}
$\ov{q}=\{\langle \sigma_i, n, 0 \rangle: 1\leq i \leq m, n< n^*     \}.$
\end{center}
Then $\ov{q}\in \MCB(\k)$ and $(\emptyset, \ov{q})\vdash \ulcorner k_{\a_i}\geq n^*  \urcorner$ for all $1\leq i \leq m.$ Let
\begin{center}
$\ov{p}=\{ \langle f_{\a_i}(n+k_{\a_i}), \sigma_i, 0, b(\a_i, n) \rangle: 1\leq i\leq m, (\a_i, n)\in dom(b) \}.$
\end{center}
Then $\ov{p}\in \MCB(\k\times\k)$ is well-defined and for $(\a_i,n)\in dom(b), 1\leq i \leq m$ we have
\begin{center}
$(\ov{p},\ov{q})\vdash \ulcorner \lusim{s}_{\a_i}(n)= \lusim{r}_{f_{\a_i}(n+k_{\a_i}),\sigma_i}(0)=\ov{p}(f_{\a_i}(n+k_{\a_i}), \sigma_i, 0)= b(\a_i, n)   \urcorner$
\end{center}
and hence
\begin{center}
$(\ov{p},\ov{q})\vdash \ulcorner \langle \lusim{s}_{\a}: \a \in I   \rangle$ extends $b \urcorner.$
\end{center}
(*) follows and we are done.
\end{proof}
The theorem follows.
\end{proof}

\section{Getting results from optimal hypotheses}

\begin{theorem}
Suppose $GCH$ holds and $\kappa$ is a cardinal of countable cofinality and
there are $\kappa-$many measurable cardinals below $\kappa.$ Then
there is a cardinal preserving not adding a real extension $V_1$
of $V$ in which there is a splitting $\langle S_\sigma: \sigma<\k    \rangle$ of $\k^+$ into sets of size $\k^+$ such that for every countable set $I\in V$ and every $\sigma<\k, |I\cap S_\sigma|<\aleph_0.$

\end{theorem}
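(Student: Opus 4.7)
The plan is to construct $V_1$ as the generic extension of $V$ by a Prikry-type forcing $\mathbb{P}$ that exploits the $\kappa$-many measurable cardinals below $\kappa$ to add, coherently, a function $h\colon \kappa^+ \to \kappa$; the splitting is then defined by $S_\sigma = h^{-1}(\sigma)$. First I would enumerate the measurables below $\kappa$ as $\langle \delta_\xi : \xi < \kappa\rangle$ with fixed normal measures $U_\xi$, and, using $\cf(\kappa)=\omega$, pick an increasing cofinal $\omega$-sequence $\langle \kappa_n : n<\omega\rangle$ among them. Under $\GCH$, the family of countable $V$-subsets of $\kappa^+$ has size $\kappa^+$; enumerate it as $\langle I_\alpha : \alpha<\kappa^+\rangle$. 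These are the obstructions against which $h$ must be diagonalized.

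Conditions in $\mathbb{P}$ will be pairs $(p, T)$ where $p\colon a\to\kappa$ is a partial function with $a\in V$ of size $<\kappa$ (the stem), and $T$ is a measure-one tree built from the $U_\xi$'s prescribing admissible further values of $h$ at ordinals outside $a$. Extensions refine the stem, refine the tree, and must preserve the promise that for every countable $I\in V$ already mentioned, $p^{-1}(\sigma)\cap I$ is finite for each $\sigma<\kappa$. The key point is that at each new ordinal $\beta$ added to $\dom(p)$, the admissible set of values is $U_\xi$-measure-one below some $\delta_\xi<\kappa$, so thinning it to avoid the finite set of forbidden colors, those that would spoil the finite-intersection property on some promised $I$, preserves measure-oneness.

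Three things need to be verified. First, $\mathbb{P}$ has the Prikry property, so no new reals are added. Second, $\mathbb{P}$ preserves all cardinals, in particular $\kappa^+$: this reduces, under $\GCH$, to a $\kappa^+$-chain-condition argument on stems via the $\Delta$-system lemma, since stems are bounded-size objects lying in $V$ and there are at most $\kappa^+$ isomorphism types of stem data. Third, the generic $h$ yields a splitting with the required finite-intersection property against every countable $V$-set, and each $S_\sigma$ has size $\kappa^+$; both follow from density arguments using the measure-one thinning described above, together with the fact that assigning a given color $\sigma$ to one more $\alpha$ only finitely many times per promised $I$ never exhausts the available $U_\xi$-measure-one sets.

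The main obstacle is the Prikry property: designing the tree constraints so that the pure part of $\mathbb{P}$, namely tree-refinements fixing the stem, is closed under countable decreasing sequences while still permitting the extendibility needed for the density arguments. One must coordinate the $U_{\kappa_n}$'s so that the $n$-th level of the tree draws values below $\kappa_n$, and countable intersections of measure-one sets remain measure-one on each level; then the classical Prikry stem-extension lemma applies and yields the Prikry property. Once this is in place, setting $V_1 = V[G]$ and $S_\sigma = h^{-1}(\sigma)$ completes the construction.
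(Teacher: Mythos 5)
Your construction does not address the central difficulty, which is that the finite-intersection requirement is a property of the \emph{generic} object, not of individual conditions. Each condition can keep $p^{-1}(\sigma)\cap I$ finite for every promised $I$, and yet the union over an infinite descending sequence of conditions can put infinitely many points of a single countable $V$-set $I$ into one fibre $h^{-1}(\sigma)$: ``finite at every stage'' does not give ``finite in the limit''. Thinning the measure-one sets of admissible \emph{values} (colours) at a new domain point $\beta$ does nothing to control this, because the requirement constrains the fibres over a countable $V$-set of \emph{domain} points, and no single colour is ever ``forbidden'' by finiteness alone. The only way to freeze $h^{-1}(\sigma)\cap I$ is to have a single condition decide the splitting on an entire initial segment containing $I$ (as the paper's conditions do, via the height $\tau$), and then the whole weight of the proof falls on showing that the forcing is $<\kappa$-distributive, i.e.\ that unions of descending $\delta$-sequences of such conditions ($\delta<\kappa$) are still conditions. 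That is exactly where the measurable cardinals enter in the paper, and in a way entirely different from yours: one first forces over $V$ with the Magidor iteration of Prikry forcings at a discrete set $X$ of $\kappa$-many measurables, obtaining $V[G]$ in which $t_0=\bigcup_{\xi\in X}x_\xi$ already meets every countable $V$-set finitely; the splitting is then forced over $V[G]$, and distributivity is proved with a continuous chain of elementary submodels $M_i$ chosen so that $\cf^V(M_{i+1}\cap\kappa^+)=\xi_i\in X$, the new points being inserted below $\eta^i_{\xi'_i}$ where $\xi'_i$ is the first Prikry point of $\xi_i$; a genericity argument (Lemma 3.4) shows that $\xi'_i$ dominates any $V$-sequence $\langle\alpha(i)\rangle$, so each countable $V$-set picks up only finitely many new points across the whole induction. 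Your one-step forcing over $V$ has no access to such generic information, and I do not see how the measure-one trees on colours could substitute for it.

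There are also secondary but real problems. The Prikry property is asserted rather than proved, and is doubtful for a forcing whose stem domain is an arbitrary $<\kappa$-sized subset of $\kappa^+$: for instance, a statement comparing $h(\beta_1)$ and $h(\beta_2)$ for two points outside the stem drawing values from the same normal measure cannot be decided by shrinking two measure-one sets (any two measure-one sets meet in a measure-one set), and your $\omega$-level tree (``$n$-th level draws values below $\kappa_n$'') cannot organise a $\kappa^+$-sized domain. Likewise, the proposed $\Delta$-system argument for a chain condition on stems fails, since $\kappa^{<\kappa}=\kappa^+$ when $\cf(\kappa)=\omega$ under GCH; the paper instead gets the $\kappa^{++}$-c.c.\ trivially from $|\mathbb{P}|\le\kappa^+$ and deduces preservation of $\kappa^+$ from $<\kappa$-distributivity together with the singularity of $\kappa$.
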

\begin{proof}
Let $X$ be a set of measurable cardinals below $\kappa$ of size
$\kappa$ which is discrete, i.e., contains none of its limit
points, and for each $\xi \in X$ fix a normal measure $U_{\xi}$ on
$\xi.$ For each $\xi \in X$ let  $\MPB_{\xi}$ be the Prikry
forcing associated with the measure $U_{\xi}$ and let $\MPB_{X}$
be the Magidor iteration of $\MPB_{\xi}$'s, $\xi\in X$ (cf. [2, 5]).
 Since $X$ is discrete, each condition in $\MPB_{X}$ can be seen as $p=\langle\langle
s_{\xi}, A_{\xi} \rangle: \xi \in X \rangle$ where for
$\xi \in X, \langle  s_{\xi}, A_{\xi} \rangle \in
\MPB_{\xi}$ and $supp(p)=\{\xi \in X: s_{\xi}\neq \emptyset
\}$ is finite. We may further suppose that for each $\xi \in X$
the Prikry sequence for $\xi$ is contained in $(sup(X \cap \xi),
\xi).$ Let $G$ be $\MPB_{X}-$generic over $V$. Note that $G$ is
uniquely determined by a sequence $(x_{\xi}: \xi \in X)$, where
each $x_{\xi}$ is an $\omega-$sequence cofinal in $\xi,$ $V$ and $V[G]$ have the same cardinals, and  $GCH$ holds in $V[G].$

Work in $V[G]$. We now force  $\langle S_\sigma: \sigma<\k\rangle$ as
follows. The set of conditions $\mathbb{P}$ consists of pairs
$p=\langle \tau, \lan s_\sigma: \sigma<\k \ran \rangle\in V[G]$ such that:

$(1)$ $\tau<\k^+,$

$(2)$ $\lan s_\sigma: \sigma<\k \ran$ is a splitting of $\tau$,

$(3)$ for every countable set $I\in V$ and every $\sigma<\k, |I\cap s_\sigma|<\aleph_0.$

\begin{remark}
$(a)$ Given a condition $p\in \mathbb{P}$ as above, $p$ decides an initial segment of $S_\sigma,$ namely $S_\sigma\cap \tau$, to be $s_\sigma$.  Condition $(3)$ guarantees that each component in this initial segment has finite intersection with countable sets from the ground model.

$(b)$ Let $t_0=\bigcup_{\xi\in X}x_\xi.$ By genericity arguments, it is easily seen that $t_0$ is a subset of $\kappa$ of size $\kappa$ such that for all countable sets $I\in V, |I\cap t_0|<\aleph_0$. For each $i<\kappa$ set $t_i=t_0+i=\{\alpha+i: \alpha\in t_0 \}.$ Then again by genericity arguments, for every countable set $I\in V, |I\cap t_i|<\aleph_0$. Define $s_i, i<\kappa$ by recursion as $s_0=t_0$ and $s_i=t_i \setminus \bigcup_{j<i}t_j$ for $i>0.$ Then $p=\langle \kappa, \lan s_\sigma: \sigma<\k \ran \rangle\in \mathbb{P}$ (since again by genericity arguments, $\lan s_\sigma: \sigma<\k \ran$ is a splitting of $\k$), and hence $\mathbb{P}$ is non-trivial.
\end{remark}

We call $\tau$ the height of $p$ and denote it by $ht(p).$
For $p=\langle \tau, \lan s_\sigma: \sigma<\k \ran \rangle$ and $q=\langle \nu, \lan t_\sigma: \sigma<\k \ran \rangle$ in $\MPB$ we define $p\leq q$  iff

$(1)$ $\tau\geq \nu,$

$(2)$ for every $\sigma<\k, s_{\sigma} \cap\nu=t_\sigma,$ i.e., each $s_\sigma$ end extends $t_\sigma.$

\begin{lemma}
$(a)$ $\mathbb{P}$ satisfies the $\kappa^{++}-c.c,$

$(b)$ $\mathbb{P}$ is $<\kappa-$distributive.
\end{lemma}
\begin{proof}
$(a)$ is trivial, as $|\mathbb{P}| \leq 2^\k=\k^+$. For $(b)$, fix
$\delta < \kappa, \delta$ regular, and let $p \in \mathbb{P}$
and $\lusim{g} \in V[G]^{\mathbb{P}}$ be such that
\begin{center}
$p \vdash \ulcorner \lusim{g}: \delta \rightarrow On  \urcorner.$
\end{center}
We find $q \leq p$ which decides $\lusim{g}.$
Fix in $V$ a splitting of $\kappa$ into $\delta-$many sets of size $\kappa, \langle Z_i :i<\delta \ran$ \footnote{ Note that this is possible, as $\delta<\kappa$ are cardinals in $V$. The splitting can also be chosen in $V[G].$}.
Let $\theta$ be a large enough regular cardinal. Pick an increasing continuous sequence $\langle M_{i}: i \leq \delta  \rangle$ of elementary submodels of $\langle H(\theta), \in \rangle$ of size $\kappa$ such that \footnote{Condition $(5)$ can be guaranteed using the fact that the set $K=\{\a<\k^+: cf^V(\a)\in X \}$ is a stationary subset of $\k^+$ in $V[G]$ (given $M_i$, build a suitable continuous increasing chain $\langle N_j: j<\k^+  \rangle$ consisting of models of size $\k.$ Then $\langle \sup(N_j\cap \k^+): j<\k^+  \rangle$ forms a club of $\k^+,$ and $M_{i+1}$ can be chosen to be one of those $N_j$ so that $\sup(N_j\cap \k^+)\in K$). $(7)$ can be guaranteed by the fact that $\mathbb{P}_X$ satisfies the $\k^+$-$c.c.$ and the models have size $\k$ (use the fact that given any model $N$ of size $\k,$ there exists a model in $V$ of the same size which contains $N\cap V$).}:
\begin{enumerate}
\item $\langle M_{i}: i \leq \delta  \rangle\in V[G],$
\item $p, \mathbb{P}, \lusim{g}, \langle Z_i : i<\delta \ran \in M_{0},$
\item if $i < \delta$ is a limit ordinal, then $\langle M_j: j\leq i\rangle \in M_{i+1},$
\item $cf(M_{\delta} \cap \kappa^{+})=\delta,$
\item if $i$ is not a limit ordinal, then $cf^{V}(M_{i+1} \cap \kappa^{+})= \xi_{i}$ for a measurable $\xi_{i}$ of $V$ in $X$,
\item $i<j \Rightarrow \xi_{i} < \xi_{j},$
\item $\langle M_{i} \cap V: i \leq \delta \rangle \in V.$
\end{enumerate}
For each non-limit $i< \delta, M_{i+1}\cap V$ is in $V$ by clause $(7),$ and so by clause $(5), cf^{V}(M_{i+1} \cap \kappa^{+})= \xi_{i}$, where $\xi_i\in X,$ so we can pick in $V$ a cofinal in $M_{i+1} \cap \kappa^{+}$ sequence   $\langle \eta^{i}_{\a}: \alpha < \xi_{i} \rangle,$ where $\eta^i_\a > M_i\cap \k^+,$ for all $\a<\xi_i$ \footnote{Note that $\sup(M_{i+1} \cap \kappa^{+})=M_{i+1} \cap \kappa^{+}.$ This is because if $\xi<\k^+,$ and $\xi\in M_{i+1},$ then since $\k\cup\{\k\} \subseteq M_{i+1},$ and $M_{i+1}\models |\xi|=\k,$ we have $\xi \subseteq M_{i+1}$. Also, as the sequence of $M_i$'s in increasing continuous, $\sup(M_{i} \cap \kappa^{+})=M_{i} \cap \kappa^{+}$ holds for limit ordinals $i$.}.

Denote by $\xi_i^{'}$ the first element of the Prikry sequence of
$\xi_{i}.$ We define a descending sequence $p_i=\langle \tau_i, \lan s_{i,\sigma}: \sigma<\k \ran \rangle$ of conditions by induction as
follows:

{\bf i=0.} Set $p_{0}=p.$

{\bf i=j+1.} Assume $p_{j}$ is constructed such that $p_{j} \in
M_{j}$  if $j$ is not a limit ordinal, and  $p_{j} \in
M_{j+1}$ if $j$ is a limit ordinal  and $p_{j}$ decides $\lusim{g}\upharpoonright j.$ Fix a bijection  $f_{j}:Z_j \rightarrow (ht(p_j),\eta_{\xi'_{j}}^{j})$ in $M_{j+1}$  and set \footnote{It is easily seen by induction on $j\leq i$ that $ht(p_j) < \eta^j_{\xi'_j}$:
 if $j=0$ or $j$ is a successor ordinal, then $p_{j}\in M_{j},$ so
  $ht(p_j) \in M_j\cap \k^+ < \eta^j_{\xi'_j}$. If $j$ is a limit ordinal, then  $ht(p_j)=\sup_{k<j}ht(p_k) \leq \sup_{k<j}M_k\cap \k^+=M_j\cap \k^+ < \eta^j_{\xi'_j}.$}
\begin{center}
$p'_{j+1}=\langle \eta_{\xi'_{j}}^{j},  \langle s_{j,\sigma}\cup \{f_{j}(\sigma)\}:\sigma\in Z_j \rangle^{\frown}\lan s_{j,\sigma}: \sigma\in \k\setminus Z_j  \ran    \rangle$
\end{center}
 Clearly $p'_{j+1} \in M_{j+1}.$ Let $p_{j+1} \in M_{j+1}$ be an
extension of $p_{j+1}^{'}$ which decides $\lusim{g}(j).$

{\bf limit(i).} Let $p_{i}= \langle sup_{j<i}ht(p_j), \lan \bigcup_{j<i}s_{j,\sigma}:\sigma<\k   \ran \rangle.$

Let us show that the above sequence is well-defined. Thus we need
to show that for each $i \leq \delta, p_{i} \in \mathbb{P}.$ We
prove this by induction on $i$. The successor case is trivial.
Thus fix a limit ordinal $i \leq \delta$. If $p_{i} \notin
\mathbb{P},$ we can find a countable set $I \in V, I \subseteq \k^+,$ and $\sigma<\k$ such that $I\cap s_{i,\sigma}$ is infinite.
 Define the sequence
$\langle \alpha(j): j<i \rangle$ as follows:
\begin{itemize}
\item if $I \cap (M_{j+1}\setminus M_{j}) \neq \emptyset,$ then $\alpha(j)
\in [\sup(X\cap\xi_j), \xi_j]$ is the least such that
$\eta_{\alpha(j)}^{j}> sup(I \cap (M_{j+1}\setminus M_{j})),$ \item
$\alpha(j)=\sup(X\cap \xi_j)$ otherwise. Note that in this case $\a(j) <\xi'_j$ (because the Prikry sequence for $\xi$ was chosen in the interval $(sup(X\cap \xi), \xi)$).
\end{itemize}
Clearly $\langle \alpha(j): j<i  \rangle \in V.$
\begin{lemma}
The set $K=\{j<i: \xi_{j}^{'} \leq \alpha(j) \}$ is finite.
\end{lemma}
\begin{proof}
Let $p \in \MPB_{X}, p=\langle\langle
s_{\xi}, A_{\xi} \rangle: \xi \in X \rangle.$
 Extend $p$ to $q=\langle\langle
t_{\xi}, B_{\xi} \rangle: \xi \in X \rangle$ by setting
\begin{itemize}
\item $t_{\xi}=s_{\xi}$ and $B_{\xi}=A_{\xi}$ for $\xi
\in supp(p),$ \item $t_{\xi}=\emptyset$ and $B_{\xi}=
A_{\xi} \backslash (\a(j)+1),$ if $\xi=\xi_j$ (some $j<i$) and $\xi\notin supp(p)$,
\item $t_{\xi}=\emptyset$ and $B_{\xi}=
A_{\xi}$, otherwise.
\end{itemize}
Then $q \leq p$ and $q \vdash \ulcorner  \lusim{K} \subseteq \{j<i: \xi_j\in supp(p)    \} \urcorner$,  so $q \vdash \ulcorner \lusim{K}$ is finite $ \urcorner.$
\end{proof}
Take $i_0<i$ large enough so that no point $\geq i_0$ is in $K.$ Then for all $j\geq i_0$ we have $\xi_{j}^{'} > \alpha(j),$ hence
$\eta_{\xi_{j}^{'}}^{j}> \sup(I \cap (M_{j+1}))$ \footnote{This is trivial if  $I \cap (M_{j+1}\setminus M_{j}) \neq \emptyset,$ as then $\eta_{\xi_{j}^{'}}^{j} > \eta_{\a(j)}^{j} > \sup(I \cap (M_{j+1}\setminus M_{j}))=\sup(I\cap M_{j+1}).$ If  $I \cap (M_{j+1}\setminus M_{j})= \emptyset,$ then $\eta_{\xi_{j}^{'}}^{j} > M_{j}\cap \k^+=\sup(M_j\cap \k^+) \geq \sup(I\cap M_j)$ (as $I \subseteq \k^+$) and $\sup(I\cap M_{j+1})=\sup(I\cap M_j)$ (since $I$ has no points in $M_{j+1}\setminus M_{j}$), and hence again $\eta_{\xi_{j}^{'}}^{j}> \sup(I \cap (M_{j+1}))$.}.

\begin{claim}
We have
\begin{center}
$I\cap s_{i,\sigma}\subseteq  I \cap (s_{i_0,\sigma}\cup \{f_{i_1}(\sigma) \})$
\end{center}
where $i_1$ is the unique ordinal less than $\delta$ so that $\sigma\in Z_{i_1}.$
\end{claim}
\begin{proof}
Assume towards a contradiction that the inclusion fails, and let $t\in I\cap s_{i, \sigma}$ be such that $t\notin  I \cap (s_{i_0,\sigma}\cup \{f_{i_1}(\sigma) \}).$ As $i$ is a limit ordinal, $I\cap s_{i,\sigma}=I \cap \bigcup_{j<i}s_{j,\sigma}$. Let $j<i$ be the least such that $t\in s_{j+1, \sigma}.$ Then as $t\in I\cap M_{j+1}$ and $j\geq i_0$ we have $t<\eta^j_{\xi'_j},$ so that by our definition of $p'_{j+1}, t$ must be of the form $f_j(\sigma),$ where $\sigma\in Z_j.$ But then $j=i_1$ and hence $t=f_{i_1}(\sigma).$ This is a contradiction, and the result follows.
\end{proof}
Thus, as $I\cap s_{i,\sigma}$ is infinite, we must have $I\cap s_{i_0,\sigma}$ is also infinite, and this is in
contradiction with our inductive assumption.

It then follows that $q=p_{\delta} \in \mathbb{P}$ and it decides $\lusim{g}.$
\end{proof}
Let $H$ be $\mathbb{P}-$generic over $V[G]$ and set
$V_{1}=V[G][H].$ It follows from Lemma 3.3 that all cardinals $\leq \k$ and $\geq \k^{++}$ are preserved. Also note that $\k^+$ is preserved, as otherwise it would have cofinality less that $\kappa$, which is impossible by the $<\k-$distributivity of $\mathbb{P}$. Hence $V_{1}$ is a cardinal preserving
and not adding reals forcing extension of $V[G]$ and hence of $V$.
For $\sigma<\k$ set $S_{\sigma}=\bigcup_{\langle \tau, \lan s_\sigma: \sigma<\k \ran \rangle\in H}s_{\sigma}.$

\begin{lemma}
The sequence $\lan S_\sigma: \sigma<\k \ran$ is as required.
\end{lemma}
\begin{proof}
For each $\tau<\kappa^+,$ it is easily seen that the set of all conditions $p$ such that $ht(p)\geq \tau$ is dense, so $\lan S_\sigma: \sigma<\k \ran$ is a partition of $\k^+.$ Now suppose that $I\in V$ is a countable subset of $\k^+.$ Find $p=\langle \tau, \lan s_\sigma: \sigma<\k \ran \rangle\in H$ such that $\tau \supseteq I.$ Then for all $\sigma<\k, S_\sigma\cap I=s_\sigma\cap I,$ and hence $|S_\sigma\cap I|=|s_\sigma\cap I|<\aleph_0.$
\end{proof}

Theorem 3.1 follows.
\end{proof}

\begin{remark}
$(a)$The size of a set $I$ in $V$ can be changed from countable to any fixed $\eta < \kappa.$
Given such $\eta,$ we start with the Magidor iteration of Prikry forcings above $\eta$.\footnote{The reason for starting the iteration above $\eta$ is to add no subsets of $\eta$. This will guarantee that if $t_0$ is defined as in Remark 3.2$(b)$, then $t_0$ has finite intersection with sets from $V$ of size $\eta$. Using this fact we can show as before that there is a splitting of $\k$ into $\k$ sets, each of them having finite intersection with ground model sets of size $\eta$. This makes the second step of the above forcing construction well-behaved.} The rest of the conclusions are the same.

$(b)$ It is possible to add a one element Prikry sequence to each $\xi \in X.$\footnote{Conditions in the forcing are of the form $\langle p_\xi: \xi\in X \rangle,$ where for each $\xi\in X, p_\xi$ is either of the form $A_\xi$ for some $A_\xi\in U_\xi$, or $\alpha_\xi$ for some $\alpha_\xi <\xi.$ We also require that there are only finitely many $p_\xi$'s of the form $\alpha_\xi$. When extending a condition, we allow either $A_\xi$ to become thinner, or replace it by some ordinal $\alpha_\xi \in A_\xi$.} Then $V_1$ will be a cofinality preserving generic extension of $V$.
\end{remark}

The next corollary follows from Theorem 3.1 and Remark 2.2.
\begin{corollary}
Suppose that $GCH$ holds in $V,$ , $\kappa$ is a cardinal of countable cofinality and
there are $\kappa-$many measurable cardinals below $\kappa$.
Then there is a cardinal preserving not adding a real extension
$V_1$ of $V$ such that adding $\k-$many Cohen reals over
$V_1$ produces $\k^{+}-$many Cohen reals over $V$.
\end{corollary}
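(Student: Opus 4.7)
The plan is to combine Theorem 3.1, Remark 2.2, and Theorem 2.1 in a nearly formulaic way. First I will apply Theorem 3.1 to obtain a cardinal-preserving, no-new-reals extension $V_1 \supseteq V$ equipped with a splitting $\langle S_\sigma : \sigma<\kappa \rangle$ of $\kappa^+$ into pieces of size $\kappa^+$ such that $|I \cap S_\sigma| < \aleph_0$ for every countable $I \in V$ and every $\sigma<\kappa$. This is exactly hypothesis $(d)$ of Theorem 2.1 with $\lambda = \kappa^+$.

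It remains to verify hypotheses $(a)$--$(c)$ of Theorem 2.1 in $V_1$. Hypothesis $(a)$ is immediate because $V_1$ preserves cardinals and both $\kappa$ and $\kappa^+$ are cardinals of $V$. For $(b)$, I will fix in $V$ an increasing $\omega$-sequence $\langle \kappa_n : n<\omega \rangle$ of successor cardinals cofinal in $\kappa$, which exists since $\cf^V(\kappa)=\omega$. Because measurables are inaccessible, none of the $\kappa_n$ lies in the discrete set $X$ used in Theorem 3.1, so no $\kappa_n$ is singularized by the Magidor iteration $\mathbb{P}_X$; moreover the second-stage forcing is $<\kappa$-distributive by Lemma 3.3$(b)$, so adds no new $\omega$-sequences below $\kappa$ and therefore cannot singularize $\kappa_n$ either. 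Thus each $\kappa_n$ remains regular in $V_1$, giving $(b)$. Hypothesis $(c)$ is precisely the content of Remark 2.2: the inductive construction there produces inside $V_1$ a mod-finite increasing sequence $\langle f_\alpha : \alpha < \kappa^+ \rangle$ in $\prod_{n<\omega}(\kappa_{n+1}\setminus\kappa_n)$, using only that $\cf(\kappa)=\omega$ and that any family of at most $\kappa$-many functions in this product has a mod-finite upper bound, both of which hold in $V_1$.

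With $(a)$--$(d)$ all in hand, I apply Theorem 2.1 inside $V_1$ with $\lambda = \kappa^+$ to conclude that forcing $\kappa$-many Cohen reals over $V_1$ produces $\kappa^+$-many Cohen reals over $V$, as required. The proof is essentially a bookkeeping exercise: the heavy lifting has already been done in Theorem 3.1, Remark 2.2 supplies the dominating family automatically at $\lambda=\kappa^+$, and Theorem 2.1 packages the conclusion. The only mildly delicate point is ensuring that the chosen sequence $\kappa_n$ remains regular in $V_1$, which I handle by selecting successor cardinals (necessarily disjoint from $X$) and invoking the $<\kappa$-distributivity of the second-stage forcing.
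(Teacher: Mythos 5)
Your proposal is correct and follows exactly the route the paper intends: the paper's entire proof of this corollary is the remark that it ``follows from Theorem 3.1 and Remark 2.2,'' i.e., Theorem 3.1 supplies hypothesis $(d)$ of Theorem 2.1 and Remark 2.2 supplies $(c)$ for $\lambda=\kappa^+$. Your extra care over hypothesis $(b)$ is fine but can be shortened: since $V_1$ has the same cardinals as $V$, any $\omega$-sequence of successor cardinals of $V$ cofinal in $\kappa$ consists of successor cardinals of $V_1$, which are regular automatically, with no need to invoke discreteness of $X$ or distributivity.
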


\begin{theorem}Assume that there is no sharp for a strong cardinal.
Suppose $V_1\subseteq V_2$  have the same cardinals, same reals and
there is an infinite set of ordinals $S$ in $V_2$
which does not contain an infinite subset which is in $V_1$.
Then either
\begin{enumerate}
\item $S$ is countable, and then there is a measurable cardinal  $\leq\sup(S)$ in $\mathcal{K}$,
\\or
\item $S$ is uncountable, and then
there is $\delta\leq\sup(S)$ which is a limit of $|S|$--many or $\delta-$many measurable cardinals of $\mathcal{K}$.
\end{enumerate}
\end{theorem}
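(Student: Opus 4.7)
The plan is to invoke the covering lemma for the core model $\mathcal{K}$ under the ``no sharp for a strong cardinal'' hypothesis. First I would verify that $\mathcal{K}$ is absolute between $V_1$ and $V_2$: under this large-cardinal anti-hypothesis the Mitchell--Schimmerling--Steel core model computed in $V_1$ agrees with the one computed in $V_2$, so $\mathcal{K}^{V_1}=\mathcal{K}^{V_2}=:\mathcal{K}$ and $\mathcal{K}\subseteq V_1\subseteq V_2$. I would then argue by contraposition: assume both conclusions fail, and produce an infinite subset of $S$ lying in $V_1$, contradicting the hypothesis.

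For the countable case, assume $S$ is countable in $V_2$ and $\mathcal{K}$ has no measurable cardinal $\le \sup S$. The covering lemma for $\mathcal{K}$, applied in $V_2$ in the absence of such measurables, yields $Y\in\mathcal{K}$ with $S\subseteq Y$ and $|Y|^{V_2}\le \aleph_1$. Enumerate $Y$ in $V_1$ as $\{y_\alpha : \alpha < \theta\}$ with $\theta \le \omega_1$, possible since $\mathcal{K}\subseteq V_1$ and cardinals agree. Then $T = \{\alpha < \theta : y_\alpha \in S\}$ is countable in $V_2$, hence bounded in $\omega_1$ by regularity, and therefore codable as a real. By the same-reals hypothesis $T\in V_1$, whence $S = \{y_\alpha : \alpha \in T\} \in V_1$, contradicting the assumption that no infinite subset of $S$ lies in $V_1$.

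For the uncountable case, I would first apply the countable case to countably infinite subsets $S_0\subseteq S$: each such $S_0$ is itself an infinite subset of $S$, so by the hypothesis no infinite subset of $S_0$ is in $V_1$ either, and the countable case produces a measurable of $\mathcal{K}$ below $\sup S_0$. Varying $S_0$ so that $\sup S_0$ is cofinal in $\sup S$ gives measurables of $\mathcal{K}$ cofinally below $\sup S$. To strengthen ``cofinally many'' to ``$|S|$-many or $\delta$-many at some $\delta \le \sup S$'', I would apply the covering lemma directly to $S$ itself, obtaining $Y\in\mathcal{K}$ with $|Y|^{V_2} = |S|$, and extract the required count from the exceptional measurables / indiscernibles appearing in the error term of the covering. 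The dichotomy ``$|S|$-many or $\delta$-many'' matches precisely the two ways a Mitchell--Schimmerling--Steel-style covering can fail to be tight.

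The main obstacle is this last bookkeeping step: translating the refined form of the covering failure into the clean statement about a $\delta$ which is a limit of $|S|$- or $\delta$-many measurables of $\mathcal{K}$. The countable case is comparatively direct, since the failure can be witnessed by a single real coding $S$ relative to a $\mathcal{K}$-cover, and the same-reals hypothesis then puts $S$ itself into $V_1$; the uncountable case by contrast requires controlling the global distribution of measurables of $\mathcal{K}$ below $\sup S$, which is the delicate combinatorial heart of the argument.
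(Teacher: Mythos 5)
Your countable case is essentially the paper's argument: cover $S$ by a set $Y\in\mathcal{K}$ of size $\aleph_1$ via the Covering Theorem (valid since $\mathcal{K}$ has no measurable $\le\sup(S)$ and $\mathcal{K}\subseteq V_1$), pull $S$ back through an enumeration of $Y$ chosen in $V_1$ to a bounded subset of $\omega_1$, code that as a real, and use the same-reals hypothesis to conclude $S$ itself lies in $V_1$, a contradiction. That part is correct and matches the paper.

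The uncountable case has a genuine gap, which you yourself flag. Two problems. First, the preliminary step does not deliver what you claim: applying the countable case to various countable $S_0\subseteq S$ only produces, for each $S_0$, \emph{some} measurable of $\mathcal{K}$ below or equal to $\sup(S_0)$; nothing prevents all of these from being the same measurable, so you do not obtain measurables cofinally below $\sup(S)$ this way. Second, and more importantly, the step you defer --- extracting the count ``$|S|$-many or $\delta$-many'' from an ``error term'' of the covering lemma --- is the entire content of the uncountable case, and there is no off-the-shelf form of covering that hands you that dichotomy. The paper's actual mechanism is a minimal-counterexample descent on $\sup(S)$: assume no $\delta\le\sup(S)$ is a limit of $|S|$-many or $\delta$-many measurables of $\mathcal{K}$, and choose such an $S$ with $\delta=\sup(S)$ least. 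Minimality forces $\delta$ to be a cardinal; covering applied to a suitable subset of $S$ shows the measurables of $\mathcal{K}$ must be unbounded in $\delta$ (otherwise one covers by a set of smaller size and produces a counterexample of smaller supremum); hence $\delta$ is singular and a limit of measurables. Letting $\eta$ be the number of measurables of $\mathcal{K}$ below $\delta$, the failure of both disjuncts at $\delta$ gives $|S|>\eta\ge\cf(\delta)$, so some initial segment $S\cap\delta_{i^*}$ has size $>\eta$ and is itself a counterexample with strictly smaller supremum --- contradiction. This induction on the supremum is the missing idea; without it (or an equivalent reduction) your outline does not close.
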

\begin{proof}
Given a model $V$, let $\mathcal{K}(V)$ denote the core model of $V$ below the strong cardinal.
Note that $\mathcal{K}(V_1)=\mathcal{K}(V_2),$  since the models $V_1$ and $V_2$ agree about cardinals. We denote this common core model by $\mathcal{K}.$

Let us first assume that $S$ is countable.
Suppose otherwise, i.e., there are no measurable cardinals $\leq\sup(S)$ in $\mathcal{K}$. Then by the Covering Theorem (see [6]) there is $Y \in \mathcal{K}$, $|Y|=\aleph_1$ which covers $S$.
Fix some $f:\aleph_1 \leftrightarrow Y$ in $V_1$. Consider $Z = f^{-1''}S$.
Then $Z$ also does not contain an infinite subset which is in $V_1$.
But $Z$ is countable, hence there is $\eta<\omega_1$ with $Z \subseteq \eta$.
Let  $g:\omega \leftrightarrow \eta$ in $V_1$.
Consider $X = g^{-1''}Z$. Then $X$ also does not contain an infinite subset which is in $V_1$.
But this is impossible since $V_1,V_2$ have the same reals (and hence $X$ itself is in $V_1$).
Contradiction.

Let us deal now with the uncountable case.
Suppose otherwise, i.e., there is no $\delta\leq\sup(S)$ which is a limit of $|S|-$many or $\delta-$many measurable cardinals of $\mathcal{K}$. Pick a counterexample $S$ with $\sup(S)$ as small as possible.
Denote $\sup(S)$ by $\delta$.
By minimality, $\delta$ is a cardinal.
Also, the measurable cardinals of $\mathcal{K}$ are unbounded in $\delta$.
For otherwise, let $\xi$ be their  supremum.
Pick $S' \subseteq S$ of size $\xi$.
By the Covering Theorem, $S'$ can be covered by a set in $\mathcal{K}$ of size $\xi<\delta$,
and then we get a contradiction to the minimality of $\delta$, as witnessed by $\xi$ and $S'$ \footnote{We then have $\sup(S')=\xi < \delta$ and $S'$ is a counterexample to our assumption of smaller supremum.}.

Clearly,  $\delta$ must be a singular cardinal and by the above, $\delta$ is a limit of measurable cardinals  in  $\mathcal{K}$. Fix a cofinal sequence $\langle \delta_i : i<\cf(\delta) \rangle$.
Denote by $\eta $ the cardinality of the set $\{\alpha<\delta : \alpha \text{ is a measurable cardinal in  } \mathcal{K} \}$.
By the assumption, $|S|>\eta\geq \cf(\delta)$. But then there is $i^*<\cf(\delta) $ such that $S \cap \delta_{i^*}$ has size $>\eta$.
This is impossible by the minimality of $\delta$.
Contradiction.
\end{proof}

The conclusions of the theorem are optimal.
A Prikry sequence witnesses this in the countable case and the Magidor iteration of Prikry forcing witnesses this in the uncountable case.

\begin{theorem}
Suppose that $V_1 \supseteq V$ are such that:

$(a)$ $V_1$ and $V$ have the same cardinals and reals,

$(b)$ $\kappa < \lambda$ are infinite cardinals of $V_1$,

$(c)$ there is no splitting $\lan S_{\sigma}:\sigma<\k \ran$ of $\l$ in $V_1$ as in Theorem 2.1$(d).$

Then adding $\kappa-$many Cohen reals over $V_1$ cannot produce $\lambda-$many Cohen reals over $V.$
\end{theorem}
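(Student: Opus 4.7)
I would argue by contrapositive: assume the conclusion fails, and produce inside $V_1$ a splitting of $\lambda$ of the kind described in Theorem 2.1$(d)$, contradicting $(c)$. Suppose $\langle \dot{s}_\alpha:\alpha<\lambda\rangle\in V_1$ is a sequence of $\mathbb{C}(\kappa)$-names forced to be $\mathbb{C}(\lambda)$-generic over $V$. By the $c.c.c.$ of $\mathbb{C}(\kappa)$, for each $\alpha$ there is, in $V_1$, a countable set $A_\alpha\subseteq\kappa$ on which $\dot{s}_\alpha$ depends. Hypothesis $(a)$ further implies that for any countable $A\in V_1$ the Cohen extensions $V[G\restriction A]$ and $V_1[G\restriction A]$ share the same reals, so each $s_\alpha$ in fact belongs to $V[G\restriction A_\alpha]$, computed from $G\restriction A_\alpha$ by a Borel map whose code lies in $V$.

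The next step is to use the $A_\alpha$'s to define the splitting inside $V_1$. The naive recipe $S_\sigma^{0}=\{\alpha<\lambda : \sigma=\min A_\alpha\}$ gives a partition of $\lambda$ into at most $\kappa$ pieces; a routine rearrangement in $V_1$ (absorbing all pieces of size $<\lambda$ into one fixed large piece, and then subdividing a single block of size $\lambda$ into $\kappa$ many blocks of size $\lambda$) upgrades this to a splitting $\langle S_\sigma:\sigma<\kappa\rangle$ of $\lambda$ into exactly $\kappa$ blocks of size $\lambda$. Thus the first two clauses of $(d)$ become automatic; only the finite-intersection clause remains, and its verification would complete the contradiction with $(c)$.

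The heart of the argument is this finite-intersection verification: for every countable $I\in V$ and every $\sigma<\kappa$, $|I\cap S_\sigma|<\aleph_0$. If this failed, then infinitely many reals $s_\alpha$ (with $\alpha\in I$) would share $\sigma$ as a distinguished coordinate of their support, and the plan is to detect this from inside $V$ by constructing a dense open set $D\subseteq\mathbb{C}(I)$, built from $I$ alone, that $\langle \dot s_\alpha:\alpha\in I\rangle$ could not be forced to meet, contradicting the assumed Cohen-genericity over $V$. The main obstacle is precisely the construction of $D$. Merely sharing a coordinate of the support is not, by itself, enough to destroy mutual Cohen-genericity (for example, reals of the form $r_0\oplus r_i$ are mutually Cohen-generic over $V$ even though they all have $0$ in their support), so one cannot work with the raw $A_\alpha$. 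The natural fix is to attach to each $\dot s_\alpha$ a finer invariant, such as an \emph{essential} support $A^{*}_\alpha\subseteq A_\alpha$ consisting only of those coordinates that cannot be deleted without changing $\dot{s}_\alpha$, and to base the splitting on $\min A^*_\alpha$ instead. The desired $V$-dense set $D$ is then constructed by exploiting the irreducibility of the dependence, using critically that by Step~1 the Borel code of the decoding map lies in $V$, so $V$ has enough information to write down the combinatorial incompatibility the infinitely many $s_\alpha$'s would be forced into.
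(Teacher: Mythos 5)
Your overall skeleton (argue by contradiction, manufacture a splitting of $\lambda$ into $\kappa$ pieces inside $V_1$, then test it against a countable $I\in V$ via a dense subset of $\mathbb{C}(I)$ lying in $V$) is the paper's, but the proof has a genuine gap at exactly the point you yourself call the heart of the argument: the dense set $D$ is never constructed, and the invariant you propose to base the splitting on --- the minimum of an ``essential'' support of the name $\dot{s}_\alpha$ --- cannot produce one. Sharing an essential-support coordinate does not destroy mutual Cohen-genericity: fix an injection $h:I\to\omega$ in $V$ and set $s_\alpha(0)=r_\alpha(0)\oplus r_0(h(\alpha))$, $s_\alpha(n)=r_\alpha(n)$ for $n>0$; every $s_\alpha$ depends irreducibly on coordinate $0$, yet $\langle s_\alpha:\alpha\in I\rangle$ is still $\mathbb{C}(I)$-generic over $V_1$ (the map taking the original generic to this sequence pulls dense open sets back to dense open sets), hence over $V$. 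So ``irreducibility of the dependence'' is not the combinatorial incompatibility you need, and no amount of Borel coding in $V$ will extract one from it. (Your motivating example is also off: reals of the form $r_0\oplus r_i$ are \emph{not} mutually Cohen-generic, since any two of them agree on all even coordinates, a nowhere dense condition on the pair; this only shows the raw support is too coarse, not that an essential support is fine enough.)

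The invariant the paper uses is not the support of the name but decidability of a single bit: for $p\in\mathbb{C}(\kappa)$ put $C_p=\{\alpha<\lambda:\ p \mbox{ decides } \dot{s}_\alpha(0)\}$. Genericity of $G$ gives $\lambda=\bigcup_{p\in G}C_p$, a union of only $\kappa$ many sets, and the splitting is $S_\sigma=C_{p_\sigma}\setminus\bigcup_{\xi<\sigma}C_{p_\xi}$ for an enumeration $\langle p_\xi:\xi<\kappa\rangle$ of $G$. If some countable $I\in V$ meets some $S_\sigma$ infinitely, then since $V$ and $V_1$ have the same reals one may shrink $I$ inside $V$ so that $I\subseteq C_{p_\sigma}$ and, say, $p_\sigma\Vdash \dot{s}_\alpha(0)=0$ for every $\alpha\in I$; the single dense open set $D=\{b\in\mathbb{C}(I):\exists\alpha\in I\ b(\alpha,0)=1\}$, which is definable from $I$ alone and hence lies in $V$, is then avoided by $\langle s_\alpha:\alpha\in I\rangle$ below $p_\sigma$, contradicting genericity over $V$. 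This one observation replaces the entire unbuilt machinery of essential supports and Borel codes, and it is the step your proposal is missing.
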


\begin{proof}
Suppose not. Let $\langle r_{\alpha}: \alpha < \lambda  \rangle$ be a sequence of $\lambda-$many Cohen reals over $V$ added after forcing with $\mathbb{C}(\kappa)$ over $V_1$. Let $G$ be $\mathbb{C}(\kappa)-$generic over $V_1$. For each $p\in \mathbb{C}(\kappa)$ set
\begin{center}
$C_{p}=\{\alpha < \lambda: p$ decides $ \lusim{r}_{\alpha}(0)  \}.$
\end{center}
Then by genericity $\lambda = \bigcup_{p \in G} C_{p}.$ Fix an enumeration $\lan p_\xi:\xi<\k  \ran$ of $G,$ and define  a splitting $\lan S_{\sigma}:\sigma<\k \ran$ of $\l$ in $V_1[G]$ by setting $S_\sigma = C_{p_{\sigma}}\setminus \bigcup_{\xi<\sigma}C_{p_{\xi}}.$
By $(a)$ and $(c)$ we can find a countable $I\in V$ and $\sigma<\k$ such that $I\subseteq S_{\sigma}.$ \footnote{In fact, by $(c)$ there exist a countable  $I\in V$ and some $\sigma<\k$ such that $I\cap S_{\sigma}$ is infinite. By $(a)$, $V$ and $V_1$ have the same reals, and hence $I\cap S_{\sigma}\in V.$ So by replacing $I$ with $I\cap S_{\sigma},$ if necessary, we can assume that $I\subseteq S_{\sigma}$.}
 Suppose for simplicity that $\forall \alpha \in S_{\sigma}, p_{\sigma} \vdash \ulcorner \lusim{r}_{\alpha}(0)=0  \urcorner.$ Let $q \in \mathbb{C}(\k)$ be such that
\begin{center}
$q \vdash^{V} \ulcorner I \in V$ is countable and $ \forall \alpha \in I, \lusim{r}_{\alpha}(0)=0 \urcorner.$
\end{center}
Pick $\langle 0, \alpha \rangle \in \omega \times I$ such that $\langle 0, \alpha \rangle \notin supp(q).$ Let $\bar{q}=q \cup \{\langle \langle 0, \alpha \rangle, 1 \rangle \}.$ Then $\bar{q} \in \mathbb{C}(\k), \bar{q} \leq q$ and $\bar{q} \vdash \ulcorner \lusim{r}_{\alpha}(0)=1  \urcorner$, which is a contradiction.
\end{proof}

The following corollary answers a question from [1].
\begin{corollary}
The following are equiconsistent:

$(a)$ There exists a pair $(V_1, V_2), V_1 \subseteq V_2$ of models of set theory with the same cardinals and reals and a cardinal $\kappa$ of cofinality $\omega$ (in $V_2$) such that adding $\kappa-$many Cohen reals over $V_2$ adds more than $\kappa-$many Cohen reals over $V_1.$

$(b)$ There exists a cardinal $\delta$ which is a limit of $ \delta-$many measurable cardinals.
\end{corollary}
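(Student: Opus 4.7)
The plan is to prove both directions of the equiconsistency by combining the three main results of this section. For $(b)\Rightarrow(a)$, I will extract from $\delta$ a cardinal $\kappa$ of cofinality $\omega$ still carrying $\kappa$--many measurables below it, and apply Corollary 3.8. For $(a)\Rightarrow(b)$, I will feed the witnessing pair into the contrapositive of Theorem 3.10 to obtain a splitting in $V_2$, restrict to a single block of it, and apply Theorem 3.9 to that block in order to recover a core-model cardinal which is a limit of itself-many measurables.

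For the upper bound $(b)\Rightarrow(a)$, start in a model of $\mathrm{Con}(\mathrm{ZFC}+\exists\delta$ a limit of $\delta$--many measurables$)$ and pass to a core model so that $\mathrm{GCH}$ holds. If $\mathrm{cf}(\delta)=\omega$ set $\kappa:=\delta$. Otherwise let $f\colon\delta\to\delta$ be the strictly increasing enumeration of the measurables below $\delta$; its fixed-point set is club in $\delta$, and the supremum of any $\omega$--sequence of fixed points of $f$ is itself a fixed point. Taking such a supremum yields $\kappa<\delta$ with $\mathrm{cf}(\kappa)=\omega$ and with $\kappa$--many measurables below $\kappa$. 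Corollary 3.8 then delivers a cardinal preserving, non-real-adding extension $V_2\supseteq V$ in which adding $\kappa$--many Cohen reals over $V_2$ produces $\kappa^+$--many Cohen reals over $V$, so $(V_1,V_2):=(V,V_2)$ witnesses $(a)$ with $\lambda=\kappa^+>\kappa$.

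For the lower bound $(a)\Rightarrow(b)$, fix a pair $(V_1,V_2)$ as in $(a)$ together with $\kappa<\lambda$ witnessing it; note $\kappa$ is singular of cofinality $\omega$ in $V_2$, hence $\kappa\geq\aleph_\omega$ and $\lambda>\aleph_\omega$. We may assume no inner model has a strong cardinal, since otherwise $(b)$ follows a fortiori. Let $\mathcal{K}$ be the common core model of $V_1$ and $V_2$ below a strong, which is well defined because $V_1$ and $V_2$ share cardinals. The contrapositive of Theorem 3.10 (combined with (a)~and~(b) of that theorem) yields in $V_2$ a splitting $\langle S_\sigma:\sigma<\kappa\rangle$ of $\lambda$ into pieces of size $\lambda$ such that $|I\cap S_\sigma|<\aleph_0$ for every countable $I\in V_1$ and every $\sigma<\kappa$. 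Fix any one block $S:=S_{\sigma_0}$: it is uncountable of size $\lambda$, and any infinite $X\in V_1$ with $X\subseteq S$ would contain a countable infinite subset $X_0\in V_1$ with $X_0\subseteq S$, forcing $|X_0\cap S|=\aleph_0$ and contradicting the splitting property. Hence $S$ contains no infinite $V_1$--subset, so Theorem 3.9 clause $(2)$ applies to $S$ and produces $\delta'\leq\sup(S)$ which is a limit of $|S|$--many or $\delta'$--many measurables of $\mathcal{K}$; in either case $\mathcal{K}$ is a model of $\mathrm{ZFC}$ in which some cardinal is a limit of itself-many measurables, establishing $\mathrm{Con}(b)$.

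The only non-routine step is the reduction from ``no infinite $V_1$--subset'' to ``no countable infinite $V_1$--subset'' inside a splitting block, which is the bridge between Theorem 3.10 (which guarantees only the countable version) and Theorem 3.9 (which requires the general one). With that patch in place, both directions reduce to clean applications of results already proved in this section.
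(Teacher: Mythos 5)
Your proposal is correct and follows essentially the same route as the paper: Corollary 3.8 (after reducing to a $\kappa$ of cofinality $\omega$ that is a limit of $\kappa$--many measurables) for $(b)\Rightarrow(a)$, and the contrapositive of Theorem 3.10 followed by Theorem 3.9(2) applied to a single block of the splitting for $(a)\Rightarrow(b)$, with your ``first $\omega$ elements'' reduction from infinite to countable $V_1$--subsets being exactly the step the paper leaves implicit. The only quibble is terminological: the literal fixed-point set of the enumeration of the measurables below $\delta$ can be empty (e.g.\ when $\delta$ is minimal), so you should work with its \emph{closure points}, which do form a club and give precisely the $\kappa$ you need -- this matches the paper's footnoted construction of $\delta^*=\sup_n\delta_n$.
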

\begin{proof}
Assume $(a)$ holds for some pair $(V_1, V_2)$ of models of set theory, $V_1 \subseteq V_2$ which have the same cardinals and reals. If there is a sharp for a strong cardinal, then clearly in $\mathcal{K},$ the core model for a strong cardinal, there is a cardinal $\delta$ which is a limit of $ \delta-$many measurable cardinals \footnote{In fact there are many such cardinals $\delta.$}. So assume there is no sharp for a strong cardinal.  Then by Theorem 3.10 there exists a splitting $\lan S_{\sigma}:\sigma<\k \ran$ of $\k^+$ in $V_2$ such that for every countable set $I\in V_1$ and $\sigma<\k, I\cap S_\sigma$ is finite. Take $S$ to be one of the sets $S_\sigma$ which has size $\k^+.$ So by Theorem 3.9, we get the consistency of $(b)$ \footnote{Note that necessarily case $(b)$ of Theorem 3.9 happens.}.

Conversely if $(b)$ is consistent, then by Corollary 3.8  the consistency of $(a)$ follows \footnote{If $cf(\delta)>\omega,$ then we can find $\delta^* < \delta$ of cofinality $\omega$ which is a limit of $\delta^*-$many measurable cardinals, so that Corollary 3.8 can be applied. To see such a $\delta^*$ exists, define an increasing sequence $\delta_n, n<\omega,$ of cardinals below $\delta,$ so that for any $n,$ there are at least
$\delta_n-$many measurable cardinals below $\delta_{n+1},$ and let $\delta^*=\sup_n \delta_n.$}.
\end{proof}
\section{Below the first fixed point of the $\aleph-$function}

\begin{theorem}
Suppose that $V_1 \supseteq V$ are such that $V_1$ and $V$ have the same cardinals and reals. Suppose $\aleph_{\delta} < $ the first fixed point of the $\aleph-$function, $X \subseteq \aleph_{\delta}, X \in V_{1}$ and $|X| \geq \delta^{+}$ (in $V_1$). Then $X$ has a countable subset which is in $V$.
\end{theorem}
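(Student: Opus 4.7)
The plan is to induct on $\delta$. The base case $\delta=0$ is immediate: any infinite $X \subseteq \omega$ is itself a real and so lies in $V$ by the same-reals hypothesis.

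For the inductive step I would split on whether $\delta$ is a successor or a limit.

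If $\delta = \gamma+1$, then $\aleph_\delta = \aleph_\gamma^+$ is regular, and the ``below the first fixed point'' hypothesis gives $|\gamma|^+ \leq \aleph_\gamma$ (since $\gamma$ is then not a fixed point); note also $\delta^+ = |\gamma|^+$. I would pick $X' \subseteq X$ with $|X'| = |\gamma|^+$; by regularity, $X'$ is bounded in $\aleph_\gamma^+$, say $X' \subseteq \beta < \aleph_\gamma^+$ with $|\beta| \leq \aleph_\gamma$. Since $V$ and $V_1$ have the same cardinals, an injection $h \colon \beta \to \aleph_\gamma$ exists in $V$. The inductive hypothesis, applied to $h[X'] \subseteq \aleph_\gamma$ (still of size $|\gamma|^+$), yields a countable $Y \in V$ with $Y \subseteq h[X']$; then $h^{-1}[Y] \in V$ is a countable subset of $X$.

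If $\delta$ is a limit, I would fix (in $V_1$) a cofinal sequence $\langle \delta_\eta : \eta < \mu\rangle$ in $\delta$ with $\mu = \cf^{V_1}(\delta) \leq |\delta|$, and look for some $\eta$ with $|X \cap \aleph_{\delta_\eta}| \geq \delta_\eta^+$, so that the inductive hypothesis applies at $\delta_\eta$ (note $\aleph_{\delta_\eta} < \aleph_\delta$ is still below the first fixed point). A straightforward pigeonhole gives this: if every slice had size $\leq |\delta_\eta| \leq |\delta|$, then $|X| \leq \mu \cdot |\delta| = |\delta| < |\delta|^+ = \delta^+$, contradicting $|X| \geq \delta^+$.

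The point that most warrants care is that cofinality is not absolute between $V$ and $V_1$ under merely the ``same cardinals'' assumption --- for instance, the Magidor iteration of Prikry forcings used in Section~3 preserves cardinals but changes cofinalities --- so the cofinal sequence in the limit case must be chosen inside $V_1$. This is harmless, however, because the limit-case pigeonhole only uses the absolute bound $\cf^{V_1}(\delta) \leq |\delta|$, and in the successor case the injection $h$ exists in $V$ precisely because same cardinals gives $|\beta|^V = |\beta|^{V_1} \leq \aleph_\gamma$. The hypothesis that we are below the first $\aleph$-fixed point is used exactly to guarantee $|\gamma|^+ \leq \aleph_\gamma$ in the successor case, without which the bijection-pullback step would not lower $\delta$.
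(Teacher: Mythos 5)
Your proof is correct and takes essentially the same route as the paper's: induction on $\delta$ with the base case from the same-reals hypothesis, the successor case by shrinking and bounding $X$ below $\aleph_\delta$ and pulling back through an injection lying in $V$ (the paper uses a bijection $f_\eta\colon\aleph_{\delta'}\leftrightarrow\sup(X)$), and the limit case by a pigeonhole along a cofinal sequence. Your remark that the cofinal sequence need only be chosen in $V_1$ is a correct observation of a point the paper leaves implicit, and it does not affect the argument.
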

\begin{proof}
By induction on $\delta <$ the first fixed point of the $\aleph-$function.

{\bf Case 1.} $\delta=0.$ Then $X \in V$ by the fact that $V_1$ and $V$ have the same reals.

{\bf Case 2.} $\delta = \delta^{'}+1.$ We have $\delta^{'} < \aleph_{\delta^{'}},$ hence $\delta^{+} < \aleph_{\delta},$ thus we may suppose that $|X| \leq  \aleph_{\delta^{'}}.$ Let $\eta = sup(X) < \aleph_{\delta}.$ Pick $f_{\eta}: \aleph_{\delta^{'}} \leftrightarrow \eta, f_{\eta} \in V.$ Set $Y=f_{\eta}^{-1''}X.$ Then $Y \subseteq \aleph_{\delta^{'}}, \delta^{'} < \aleph_{\delta^{'}}$ and $|Y| \geq \delta^{+} =\delta^{'+}.$ Hence by induction there is a countable set $B \in V$ such that $B \subseteq Y.$ Let $A = f_{\eta}^{''}B.$ Then $A \in V$ is a countable subset of $X$.

{\bf Case 3.} $limit(\delta).$ Let $\langle \delta_{\xi}: \xi < cf\delta \rangle$ be
increasing and cofinal in $\delta.$ Pick $\xi < cf\delta$ such
that $|X \cap \aleph_{\delta_{\xi}}| \geq \delta^{+}.$ By induction there is a countable set $A \in V$ such
 that $A \subseteq X \cap \aleph_{\delta_{\xi}} \subseteq X.$
\end{proof}

The following corollary gives a negative answer to another question from [1].
\begin{corollary}
Suppose $V_{1}, V$ and $\delta$ are as in Theorem 4.1. Then adding $\aleph_{\delta}-$many Cohen reals over $V_1$ cannot  produce $\aleph_{\delta+1}-$many Cohen reals over $V.$
\end{corollary}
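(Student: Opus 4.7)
The plan is to combine Theorem 4.1 with the contrapositive of Theorem 3.10 to derive a contradiction from the assumption that adding $\aleph_\delta$-many Cohen reals over $V_1$ produces $\aleph_{\delta+1}$-many Cohen reals over $V$. The key observation is that Theorem 3.10 provides, whenever the extra Cohen reals exist, a combinatorial splitting in $V_1$ whose parts avoid ground-model countable sets; but Theorem 4.1 says such parts cannot exist below the first fixed point of the $\aleph$-function.

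Concretely, first I would suppose for contradiction that adding $\aleph_\delta$-many Cohen reals over $V_1$ does produce $\aleph_{\delta+1}$-many Cohen reals over $V$. Taking $\kappa = \aleph_\delta$ and $\lambda = \aleph_{\delta+1}$ in Theorem 3.10 (contrapositive), I obtain in $V_1$ a splitting $\langle S_\sigma : \sigma < \aleph_\delta \rangle$ of $\aleph_{\delta+1}$ such that for every countable $I \in V$ and every $\sigma < \aleph_\delta$, the intersection $|I \cap S_\sigma|$ is finite. By pigeonhole, since $\aleph_{\delta+1}$ is regular and strictly greater than $\aleph_\delta$, at least one piece $S_{\sigma^*}$ must have cardinality $\aleph_{\delta+1}$.

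Next I would apply Theorem 4.1 to $S_{\sigma^*}$, using $\delta + 1$ in place of $\delta$. Since $\aleph_\delta$ lies below the first fixed point of the $\aleph$-function, so does $\aleph_{\delta+1}$ (fixed points are limit ordinals). Moreover, $S_{\sigma^*} \subseteq \aleph_{\delta+1}$, $S_{\sigma^*} \in V_1$, and $|S_{\sigma^*}| = \aleph_{\delta+1} \geq (\delta+1)^+$. Hence Theorem 4.1 supplies a countable set $B \in V$ with $B \subseteq S_{\sigma^*}$. But then $|B \cap S_{\sigma^*}| = |B| = \aleph_0$, directly contradicting the defining property of the splitting.

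The whole argument is essentially a clean bookkeeping step: the real content is already carried by the two theorems being invoked. The only point that requires even a moment's care is the choice of $S_{\sigma^*}$ of size $\aleph_{\delta+1}$ (guaranteed since we partition $\aleph_{\delta+1}$ into only $\aleph_\delta$ pieces) and the verification that the hypotheses of Theorem 4.1 remain available after incrementing $\delta$, which they do because the first fixed point of the $\aleph$-function is a limit. There is no real obstacle; the corollary is a direct amalgamation of the preceding results.
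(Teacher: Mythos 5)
Your argument is correct and is essentially the paper's own proof: assume the extra Cohen reals exist, invoke Theorem 3.10 (contrapositively) to extract a piece $S_{\sigma^*}\subseteq\aleph_{\delta+1}$ of size $\aleph_{\delta+1}$ meeting every countable ground-model set finitely, and contradict Theorem 4.1 applied with $\delta+1$ in place of $\delta$. Your pigeonhole justification for choosing a piece of full size and your check that $\aleph_{\delta+1}$ still lies below the first fixed point are exactly the (routine) points the paper relegates to a footnote.
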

\begin{proof}
Towards a contradiction suppose that adding $\aleph_{\delta}-$many Cohen reals over $V_1$  produces $\aleph_{\delta+1}-$many Cohen reals over $V.$ Then by Theorem 3.10, there exists $X \subseteq \aleph_{\delta+1}, X \in V_{1}$ such that $|X|=\aleph_{\delta+1} (\geq \delta^{+})$ and $X$ does not contain any countable subset from $V$ \footnote{In fact, there exists a splitting $\langle S_\sigma: \sigma< \aleph_\delta \rangle$ of $\aleph_{\delta+1}$ in $V_1$, consisting of sets of size $\aleph_{\delta+1}$ such that each $S_\sigma$ has finite intersection with any countable set from $V$. The set $X$ can be chosen to be any of $S_\sigma$'s.}, which is in contradiction with Theorem 4.1.
\end{proof}

\section{At the first fixed point of the $\aleph-$function}
The next theorem shows that Theorem 4.1 does not extend to the first fixed point of the $\aleph-$function.

\begin{theorem}
 Suppose $GCH$ holds and $\kappa$ is the least singular cardinal of cofinality $\omega$ which is a limit of $\kappa-$many measurable cardinals. Then there is a pair $(V[G], V[H])$ of generic extensions of $V$ with $V[G]\subseteq V[H]$ such that:

$(a)$ $V[G]$ and $V[H]$ have the same cardinals and reals,

$(b)$ $\kappa$ is the first fixed point of the $\aleph-$function in $V[G]$ ( and hence in V[H]),

$(c)$ in $V[H]$ there exists a splitting $\langle S_{\sigma}:\sigma<\k  \rangle$ of $\k$ into sets of size $\k$ such that for every

$\hspace{.5cm}$countable $I\in V[G]$ and $\sigma<\k, |I\cap S_{\sigma}|<\aleph_0.$

\end{theorem}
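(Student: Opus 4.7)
The plan is to construct $V[G]$ and $V[H]$ in two stages. First, a preparation forcing $V \to V[G]$ will rearrange the cardinal structure so that $\kappa$ becomes the first fixed point of the $\aleph$-function, while leaving a suitable family of measurables below $\kappa$ available in $V[G]$. Then $V[G] \to V[H]$ will be carried out in the style of the proof of Theorem~3.1: a Magidor iteration of Prikry forcings, followed by a splitting forcing whose generic partition has finite intersection with every countable set in $V[G]$.

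For Step~1, fix in $V$ a discrete cofinal set $X = \lan \xi_\alpha : \alpha<\kappa \ran$ of measurables below $\kappa$ together with chosen normal measures $U_\alpha$ on each $\xi_\alpha$. Since $X$ already provides $\kappa$-many distinct cardinals below $\kappa$, the minimality of $\kappa$ as the least singular cardinal of cofinality $\omega$ that is a limit of $\kappa$-many measurables forces $\kappa = \aleph_\kappa$ already in $V$, so the only real task of the preparation is to kill those fixed points of $\aleph$ that lie strictly below $\kappa$. A natural candidate is an Easton-support iteration $\MQB$ of Levy collapses $\Col(\xi_\alpha^+, <\xi_{\alpha+1})$, arranged so that in $V[G]$ the cardinals below $\kappa$ are exactly $\aleph_0$ together with $\xi_\alpha$ and $\xi_\alpha^+$ for $\alpha<\kappa$; a standard lifting argument applied on a carefully chosen subfamily of $X$ extends each $U_\alpha$ to a normal measure $U_\alpha^*$ on $\xi_\alpha$ in $V[G]$, with $\GCH$ preserved.

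For Step~2, work in $V[G]$ and follow the proof of Theorem~3.1. Force first with the Magidor iteration $\MPB_X$ of Prikry forcings using the lifted measures $U_\alpha^*$; this adds no reals and preserves cardinals. Then force with the analogue $\MRB$ of the forcing $\MPB$ of Section~3, whose conditions are pairs $\lan \tau, \lan s_\sigma : \sigma<\kappa \ran \ran$ with $\tau<\kappa$, $\lan s_\sigma \ran$ a partition of $\tau$, and each $s_\sigma$ having finite intersection with every countable $I \in V[G]$. Essentially the same argument as in the proof of Lemma~3.3 shows that $\MRB$ is $\kappa^+$-c.c.\ and $<\kappa$-distributive over $V[G][\MPB_X]$, since the set $\{\alpha<\kappa^+ : \cf^{V[G]}(\alpha) \in X\}$ remains stationary in $V[G][\MPB_X]$ and the Prikry sequences at the $\xi_i$'s steer the inductive construction of the distributivity proof exactly as before. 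Let $V[H]$ denote the resulting two-step extension of $V[G]$; properties (a)--(c) follow immediately from the construction, with $S_\sigma = \bigcup\{s_\sigma : \lan \tau, \lan s_\sigma \ran \ran \in G_{\MRB}\}$.

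The main obstacle is Step~1: the preparation must simultaneously realize $\kappa$ as the first fixed point of $\aleph$ and preserve enough measurability below $\kappa$ for the Magidor iteration and the distributivity argument of Step~2 to go through. Lifting a normal measure through a Levy collapse below a measurable cardinal is delicate when only measurability, and not some stronger property such as supercompactness or Laver-indestructibility, is available; one must therefore either select the cofinal family of measurables used in the preparation so that the lifting goes through, or introduce enough closure into the collapse itself. One must also rule out the accidental survival of reflecting measurables below $\kappa$ as fixed points of $\aleph$ in $V[G]$. Once Step~1 is in place, Step~2 is essentially a direct transcription of the proof of Theorem~3.1 with $V[G]$ playing the role of the original ground model $V$.
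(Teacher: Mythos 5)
There is a genuine gap, and it is structural rather than technical: your Step~1 is self-contradictory. Every measurable cardinal is (weakly) inaccessible and hence a fixed point of the $\aleph$-function, so in any model in which $\kappa$ is the \emph{first} fixed point of the $\aleph$-function there are no measurable cardinals below $\kappa$ at all. Consequently no preparation forcing can produce a $V[G]$ in which $\kappa$ is the first fixed point while ``leaving a suitable family of measurables below $\kappa$ available in $V[G]$''; the two requirements you impose on $V[G]$ exclude each other. (Concretely, your Easton iteration of $\Col(\xi_\alpha^+,<\xi_{\alpha+1})$ turns each $\xi_{\alpha+1}$ into a successor cardinal, and a successor cardinal carries no normal measure; there is nothing to lift.) This is why the decomposition ``first arrange the cardinal arithmetic, then run a Magidor iteration of Prikry forcings over the prepared model'' cannot be made to work, no matter how delicately the lifting is engineered.

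The paper's proof avoids this by never forming an intermediate model that both realizes the cardinal structure and still needs measurables. It defines, for each $\xi\in X$, two Prikry-with-collapse forcings over $V$ itself: $\mathbb{P}_\xi$ adds a one-element Prikry point $\tau_\xi$ together with Levy collapses $\Col(\sup(X\cap\xi)^+,<\tau_\xi)$ and $\Col(\tau_\xi^+,<\xi)$, while $\mathbb{Q}_\xi$ adds a two-element Prikry sequence $\langle\eta^0_\xi,\eta^1_\xi\rangle$ with the same collapses (governed by the first point only). Taking the Magidor iterations $\mathbb{P}$ and $\mathbb{Q}$ and the projection $\pi:\mathbb{Q}\to\mathbb{P}$ that forgets the second Prikry point, one lets $H$ be $\mathbb{Q}$-generic and $G=\pi''H$. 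The collapses make $\kappa$ the first fixed point in $V[G]$ and are identical in both models, which is what gives the same cardinals and reals; the desired set is $S=\{\eta^1_\xi:\xi\in X\}$, and the fact that every countable $A\in V[G]$ meets $S$ finitely is proved via cone homogeneity of the quotient $\mathbb{Q}/G$ (a countable $A\subseteq S$ lying in $V[G]$ can only ``code'' the finitely many $\xi$ in the support of a condition). Note also that for Theorem~5.1 no splitting forcing in the style of Section~3 is needed: once a single such $S$ is found, Claim~5.2 produces the splitting of $\kappa$ by translation; the forcing of Theorem~3.1 only reappears in Theorem~5.13, where $\kappa^+$ is split.
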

\begin{proof}
We first give a simple observation.
\begin{claim}
Suppose there is $S \subseteq \k$ of size $\k$ in $V[H] \supseteq V[G]$ such that for every countable $A\in V[G], |A \cap S|<\aleph_0.$ Then there is a splitting $\langle S_{\sigma}:\sigma<\k  \rangle$ of $\k$ as in $(c).$
\end{claim}
\begin{proof}
Let $\lan \a_i: i<\k \ran$ be an increasing enumeration of $S.$ We may further suppose that $\a_0=0$, each $\a_i, i>0$ is measurable \footnote{In fact it suffices for each $\a_i$ to be inaccessible in $V$.}  in $V$ and is not a limit point of $S$.\footnote{Let $f\in V$ be such that $f: \k \rightarrow X$ is a bijection, where $X$ is a discrete set of measurable cardinals of $V$ below $\kappa$  of size $\kappa$. Then if $S  \subseteq \k$ satisfies the claim, so does $f[S],$ hence we can suppose all non-zero elements of $S$ are measurable in $V,$ and are not a limit point of $S$.}  Note that for all $i<\k, sup_{j<i}\a_j< \a_i\setminus sup_{j<i}\a_j.$ Now set:

$\hspace{1cm}$ $S_0=S,$

$\hspace{1cm}$ $S_{\sigma}=\{\a_{l}+\sigma: i\leq l <\k \},$ for $0<\sigma\in [sup_{j<i}\a_{j},\a_i ).$

Then $\langle S_{\sigma}:\sigma<\k  \rangle$ is as required (note that for $\sigma>0, S_\sigma \subseteq S+\sigma = \{ \a+\sigma: \a\in S\},$ and  clearly $S+\sigma,$ and hence $S_\sigma$, has finite intersection with countable sets from $V[G]$).
\end{proof}

Thus it is enough to find  a pair $(V[G], V[H])$ of generic extensions of $V$ satisfying $(a)$ and $(b)$ with $V[G] \subseteq V[H]$ such that in $V[H]$ there is $S \subseteq \k$ of size $\k$ composed of inaccessibles,  such that for every countable $A\in V[G], |A\cap S|<\aleph_0.$

 Let $X$ be a discrete set of measurable cardinals below $\kappa$ of size $\kappa,$ and for each $\xi \in X$ fix a normal measure $U_{\xi}$ on $\xi.$
For each $\xi \in X$ we define two forcing notions $\mathbb{P}_{\xi}$ and $\mathbb{Q}_{\xi}$ as follows.
\begin{remark}
 In the following definitions we let  $sup(X \cap \xi)=\omega$ for $\xi=minX.$
\end{remark}

A condition in $\mathbb{P}_{\xi}$ is of the form $p=\langle s_{\xi}, A_{\xi}, f_{\xi} \rangle$ where
\begin{enumerate}
 \item $s_{\xi}  \in [\xi \backslash sup(X \cap \xi)^{+}]^{<2}$,
 \item if $s_{\xi} \neq \emptyset$ then  $ s_{\xi}(0)$ is an inaccessible cardinal, \item $A_{\xi} \in U_{\xi},$ \item $maxs_{\xi} < minA_{\xi},$
 \item $s_{\xi}=\emptyset \Rightarrow f_{\xi} \in Col(sup(X \cap \xi)^{+}, < \xi),$ where $Col(sup(X \cap \xi)^{+}, < \xi)$ is the L\'{e}vy collapse for collapsing all cardinals less than $\xi$ to $sup(X \cap \xi)^{+},$ and making $\xi$  become the successor of $sup(X \cap \xi)^{+},$
 \item $s_{\xi} \neq \emptyset \Rightarrow f_{\xi}=\langle f_{\xi}^{1}, f_{\xi}^{2}   \rangle$ where $f_{\xi}^{1} \in Col(sup(X \cap \xi)^{+}, < s_{\xi}(0))$ and $f_{\xi}^{2} \in Col((s_{\xi}(0))^{+}, < \xi).$
\end{enumerate}
For $p, q \in \mathbb{P}_{\xi}, p=\langle s_{\xi}, A_{\xi}, f_{\xi} \rangle$ and $q=\langle t_{\xi}, B_{\xi}, g_{\xi} \rangle$ we define $p \leq q$ iff
\begin{enumerate}
\item $ s_{\xi}$ end extends $t_{\xi},$ \item $ A_{\xi} \cup (s_{\xi} \backslash t_{\xi}) \subseteq B_{\xi},$
\item $t_{\xi}=s_{\xi}=\emptyset \Rightarrow f_{\xi} \leq g_{\xi},$
\item $t_{\xi}=\emptyset$ and $s_{\xi} \neq \emptyset \Rightarrow sup(ran(g_{\xi})) < s_{\xi}(0)$ and $f_{\xi}^{1} \leq g_{\xi},$
 \item  $t_{\xi} \neq \emptyset \Rightarrow f_{\xi}^{1} \leq g_{\xi}^{1}$ and $f_{\xi}^{2} \leq g_{\xi}^{2}$ (note that in this case we have $s_{\xi}=t_{\xi}$).

\end{enumerate}
We also define $p \leq^{*} q$ ($p$ is a Prikry or a direct extension of $q$) iff
\begin{enumerate}
\item $p \leq q,$ \item $s_{\xi}=t_{\xi}.$
\end{enumerate}
The proof of the following lemma is essentially the same as in the proofs in [2, 5].
\begin{lemma} ($GCH$)
$(a)$ $\mathbb{P}_{\xi}$ satisfies the $\xi^{+}-c.c.$

$(b)$ Suppose $p=\langle s_{\xi}, A_{\xi}, f_{\xi} \rangle \in \mathbb{P}_{\xi}$ and  $l(s_{\xi})=1$ (where $l(s_{\xi})$ is the length of $s_\xi$). Then $\mathbb{P}_{\xi}/ p=\{q \in \mathbb{P}_{\xi}: q \leq p   \}$ satisfies the $\xi-c.c.$

$(c)$ $(\mathbb{P}_{\xi}, \leq, \leq^{*})$ satisfies the Prikry property, i.e., given $p\in \mathbb{P}$ and a sentence $\sigma$ of the forcing language for $(\mathbb{P}, \leq),$ there exists $q\leq^* p$ which decides $\sigma.$

$(d)$ Let $G_{\xi}$ be $\mathbb{P}_{\xi}-$generic over $V$ and let $\langle s_{\xi}(0) \rangle$ be the one element sequence added by $G_{\xi}$. Then in $V[G_{\xi}], GCH$ holds, and the only cardinals which are collapsed are the cardinals in the intervals $(sup(X \cap \xi)^{++}, s_{\xi}(0))$ and $(s_{\xi}(0)^{++}, \xi),$ which are collapsed to $sup(X \cap \xi)^{+}$ and $s_{\xi}(0)^{+}$ respectively.
\end{lemma}
We now define the forcing notion $\mathbb{Q}_{\xi}.$ A condition in $\mathbb{Q}_{\xi}$ is of the form $p=\langle s_{\xi}, A_{\xi}, f_{\xi} \rangle$ where
\begin{enumerate}
 \item $s_{\xi}  \in [\xi \backslash sup(X \cap \xi)^{+}]^{<3},$
 \item if $s_{\xi} \neq \emptyset$ then for all $i < l(s_{\xi}), s_{\xi}(i)$ is an inaccessible cardinal,
 \item $A_{\xi} \in U_{\xi},$ \item $maxs_{\xi} < minA_{\xi},$ \item $s_{\xi}=\emptyset \Rightarrow f_{\xi} \in Col(sup(X \cap \xi)^{+}, < \xi),$
 \item $s_{\xi} \neq \emptyset \Rightarrow f_{\xi}=\langle f_{\xi}^{1}, f_{\xi}^{2}   \rangle$ where, $f_{\xi}^{1} \in Col(sup(X \cap \xi)^{+}, < s_{\xi}(0))$ and $f_{\xi}^{2} \in Col((s_{\xi}(0))^{+}, < \xi).$
\end{enumerate}
For $p, q \in \mathbb{Q}_{\xi}, p=\langle s_{\xi}, A_{\xi}, f_{\xi} \rangle$ and $q=\langle t_{\xi}, B_{\xi}, g_{\xi} \rangle$ we define $p \leq q$ iff
\begin{enumerate}
\item $ s_{\xi}$ end extends $t_{\xi},$ \item $ A_{\xi} \cup (s_{\xi} \backslash t_{\xi}) \subseteq B_{\xi},$
\item $t_{\xi}=s_{\xi}=\emptyset \Rightarrow f_{\xi} \leq g_{\xi},$
\item $t_{\xi}=\emptyset$ and $s_{\xi} \neq \emptyset \Rightarrow sup(ran(g_{\xi})) < s_{\xi}(0)$ and $f_{\xi}^{1} \leq g_{\xi},$
 \item  $t_{\xi} \neq \emptyset$ and $s_{\xi}=t_{\xi} \Rightarrow f_{\xi}^{1} \leq g_{\xi}^{1}$ and $f_{\xi}^{2} \leq g_{\xi}^{2},$
\item  $t_{\xi} \neq \emptyset$ and $s_{\xi} \neq t_{\xi} \Rightarrow sup(ran(g_{\xi}^{2})) < s_{\xi}(1), f_{\xi}^{1} \leq g_{\xi}^{1}$ and $f_{\xi}^{2} \leq g_{\xi}^{2}.$
\end{enumerate}
We also define $p \leq^{*} q$  iff
\begin{enumerate}
\item $p \leq q,$ \item $s_{\xi}=t_{\xi}.$
\end{enumerate}
As above we have the following.
\begin{lemma} ($GCH$)
$(a)$ $\mathbb{Q}_{\xi}$ satisfies the $\xi^{+}-c.c.$

$(b)$ Suppose $p=\langle s_{\xi}, A_{\xi}, f_{\xi} \rangle \in \mathbb{Q}_{\xi}, l(s_{\xi})=2.$ Then $\mathbb{Q}_{\xi}/ p=\{q \in \mathbb{Q}_{\xi}: q \leq p   \}$ satisfies the $\xi-c.c..$

$(c)$ $(\mathbb{Q}_{\xi}, \leq, \leq^{*})$ satisfies the Prikry property.

$(d)$ Let $H_{\xi}$ be $\mathbb{Q}_{\xi}-$generic over $V$ and let $\langle s_{\xi}(0), s_{\xi}(1) \rangle$ be the two element sequence added by $H_{\xi}$. Then in $V[H_{\xi}], GCH$ holds, and the only cardinals which are collapsed are the cardinals in the intervals $(sup(X \cap \xi)^{++}, s_{\xi}(0))$ and $(s_{\xi}(0)^{++}, \xi),$ which are collapsed to $sup(X \cap \xi)^{+}$ and $s_{\xi}(0)^{+}$ respectively.
\end{lemma}
Now let $\mathbb{P}$ be the Magidor iteration of the forcings $\mathbb{P}_{\xi}, \xi \in X,$ and $\mathbb{Q}$ be the Magidor iteration of the forcings $\mathbb{Q}_{\xi}, \xi \in X$. Since the set $X$ is discrete we can view each condition in $\mathbb{P}$ as a sequence $p=\langle \langle s_{\xi}, A_{\xi}, f_{\xi} \rangle: \xi \in X  \rangle$ where for each $\xi \in X, \langle s_{\xi}, A_{\xi}, f_{\xi} \rangle \in \mathbb{P}_{\xi}$ and $supp(p)=\{\xi: s_{\xi} \neq \emptyset \}$ is finite. Similarly each condition in $\mathbb{Q}$ can be viewed as a sequence $p=\langle \langle s_{\xi}, A_{\xi}, f_{\xi} \rangle: \xi \in X  \rangle$ where for each $\xi \in X, \langle s_{\xi}, A_{\xi}, f_{\xi} \rangle \in \mathbb{Q}_{\xi}$ and $supp(p)=\{\xi: s_{\xi} \neq \emptyset \}$ is finite (for more information see [2, 4, 5]).
\begin{notation}
 If $p$ is as above, then we write $p(\xi)$ for $\langle s_{\xi}, A_{\xi}, f_{\xi} \rangle.$
\end{notation}

We also define
\begin{center}
 $\pi: \mathbb{Q} \rightarrow \mathbb{P}$
\end{center}
by
\begin{center}
 $\pi(\langle \langle s_{\xi}, A_{\xi}, f_{\xi} \rangle: \xi \in X  \rangle)=\langle \langle s_{\xi}\upharpoonright 1, A_{\xi}, f_{\xi} \rangle: \xi \in X  \rangle.$
\end{center}
It is clear that $\pi$ is well-defined.
\begin{lemma}
 $\pi$ is a projection, i.e.,

$(a)$ $\pi(1_{\mathbb{Q}})=1_{\mathbb{P}},$

$(b)$ $\pi$ is order preserving,

$(c)$ if $p \in \mathbb{Q}, q \in \mathbb{P}$ and $q \leq \pi(p)$ then there is $r \leq p$ in $\mathbb{Q}$ such that $\pi(r) \leq q.$
\end{lemma}
Now let $H$ be $\mathbb{Q}-$generic over $V$ and let $G=\pi^{''}H$ be the filter generated by $\pi^{''}H.$ Then $G$ is $\mathbb{P}-$generic over $V$.
\begin{lemma}
$(a)$ if $\langle\tau_{\xi}: \xi \in X  \rangle$  and  $\langle \langle \eta_{\xi}^{0}, \eta_{\xi}^{1} \rangle: \xi \in X  \rangle$ are the Prikry sequences added by $G$ and $H$ respectively, then $\tau_{\xi}=\eta_{\xi}^{0}$ for all $\xi \in X.$

$(b)$ The models $V[G]$ and $V[H]$ satisfy the $GCH,$ have the same cardinals and reals, and furthermore the only cardinals of $V$ below $\kappa$ which are preserved are $\{\omega, \omega_{1} \} \cup lim(X) \cup  \{\tau_{\xi}, \tau_{\xi}^{+}, \xi, \xi^{+}: \xi \in X \}.$

$(c)$ $\kappa$ is the first fixed point of the $\aleph-$function in $V[G]$ (and hence in $V[H]$).
\end{lemma}
\begin{proof}
 $(a)$ and $(b)$ follow easily from Lemmas 5.4 and 5.5 and the definition of the projection $\pi.$ Let's prove $(c).$ It is clear that $\k$ is a fixed point of the $\aleph-$function in $V[G]$. On the other hand, by $(b)$, the only cardinals of $V$ below $\k$ which are preserved in $V[G]$ are $\{\omega, \omega_{1} \} \cup lim(X) \cup  \{\tau_{\xi}, \tau_{\xi}^{+}, \xi, \xi^{+}: \xi \in X \},$ and so if $\l < \k$ is a limit cardinal in $V[G]$, then $\l\in lim(X).$  But by our assumption on $\k,$ if $\l\in lim(X),$ then $X\cap \l$ has order type less than $\l,$ and hence $(\{\omega, \omega_{1} \} \cup lim(X) \cup  \{\tau_{\xi}, \tau_{\xi}^{+}, \xi, \xi^{+}: \xi \in X \}) \cap \l$ has order type less than $\aleph_\l.$ Thus $\l<\aleph_\l.$
\end{proof}
Let $\mathbb{Q}/G =\{ p\ \in \mathbb{Q}: \pi(p) \in G \}.$ Then $V[H]$ can be viewed as a generic extension of $V[G]$ by $\mathbb{Q}/G.$
\begin{lemma}
 $\mathbb{Q}/G$ is cone homogenous: given $p$ and $q$ in $\mathbb{Q}/G$ there exist $p^{*} \leq p, q^{*} \leq q$ and an isomorphism $\rho:(\mathbb{Q}/G)/ p^{*} \rightarrow (\mathbb{Q}/G)/ q^{*}.$
\end{lemma}
\begin{proof}
Suppose $p,q \in \mathbb{Q}/G.$  Extend $p$ and $q$ to $p^{*}=\langle \langle s_{\xi}, A_{\xi}, f_{\xi} \rangle: \xi \in X  \rangle$ and $q^{*}=\langle \langle t_{\xi}, B_{\xi}, g_{\xi} \rangle: \xi \in X  \rangle$ respectively so that the following conditions are satisfied:

\begin{enumerate}
\item $supp(p^{*})=supp(q^{*}).$ Call this common support $K$. \item For every
$\xi \in K, l(s_{\xi})=l(t_{\xi})=2.$ Note that then for every $\xi \in K, s_{\xi}(0)=t_{\xi}(0)=\tau_{\xi},$
 $f_{\xi}=\langle f_{\xi}^{1}, f_{\xi}^{2}   \rangle$ and $g_{\xi}=\langle g_{\xi}^{1}, g_{\xi}^{2}   \rangle$ where $f_{\xi}^{1}, g_{\xi}^{1} \in Col(sup(X \cap \xi)^{+}, < \tau_{\xi})$ and $f_{\xi}^{2}, g_{\xi}^{2} \in Col(\tau_{\xi}^{+}, < \xi).$
 \item For every $\xi \in K, A_{\xi}=B_{\xi}.$
\item For every $\xi \in K, dom(f_{\xi}^{1})=dom(g_{\xi}^{1})$ and $dom(f_{\xi}^{2})=dom(g_{\xi}^{2}).$
\item For every $\xi \in K,$ there exists an automorphism $\rho_{\xi}^{1}$  of $Col(sup(X \cap \xi)^{+}, < \tau_{\xi})$ such that $\rho_{\xi}^{1}(f_{\xi}^{1})=g_{\xi}^{1}.$
\item For every $\xi \in K,$ there exists an automorphism $\rho_{\xi}^{2}$  of $Col(\tau_{\xi}^{+}, < \xi)$ such that $\rho_{\xi}^{2}(f_{\xi}^{2})=g_{\xi}^{2}.$
\end{enumerate}
Note that clauses $(5)$ and $(6)$ are possible, as the corresponding forcing notions are homogeneous.

We now define $\rho:(\mathbb{Q}/G)/ p^{*} \rightarrow (\mathbb{Q}/G)/ q^{*}$ as follows. Suppose $r \in \mathbb{Q}/G, r \leq p^{*}.$ Let $r=\langle \langle r_{\xi}, C_{\xi}, h_{\xi} \rangle: \xi \in X  \rangle.$ Then for every $\xi \in K, r_{\xi}=s_{\xi},$ and $h_{\xi}=\langle h_{\xi}^{1}, h_{\xi}^{2}   \rangle$
where  $h_{\xi}^{1} \in Col(sup(X \cap \xi)^{+}, < \tau_{\xi})$ and $h_{\xi}^{2} \in Col(\tau_{\xi}^{+}, < \xi).$ Let
\begin{center}
 $\rho(r)=\langle \langle t_{\xi}, C_{\xi}, \langle \rho_{\xi}^{1}(h_{\xi}^{1}), \rho_{\xi}^{2}(h_{\xi}^{2}) \rangle  \rangle: \xi \in K \rangle ^{\frown} \langle \langle  r_{\xi}, C_{\xi}, h_{\xi}    \rangle: \xi \in X \setminus K \rangle.$
\end{center}
It is easily seen that $\rho$ is an isomorphism from $(\mathbb{Q}/G)/ p^{*}$ to $(\mathbb{Q}/G)/ q^{*}.$

\end{proof}

The following lemma completes the proof.
\begin{lemma}
 Let $S=\{\eta_{\xi}^{1}:\xi \in X \}.$ Then $S$ is a subset of $\kappa$ of size $\kappa$ and $|A \cap S|< \aleph_{0}$ for every countable set $A \in V[G].$
\end{lemma}
\begin{remark}
 $(a)$ Since $V[G]$ and $V[H]$ have the same reals, it suffices to prove the lemma for $A \subseteq S, A\in V[G].$ In fact suppose that the lemma is true for all countable  $A \subseteq S, A\in V[G].$ If the lemma fails, then for some countable set $B\in V[G], |B\cap S|=\aleph_0.$ Let $g: \omega \rightarrow B$ be a bijection in $V[G]$. Then $g^{-1}[B\cap S]$ is a subset of $\omega$ which is in $V[H],$ and hence in $V[G].$ Thus $B\cap S\in V[G].$ Hence we find a countable subset  $A \subseteq S$ in $V[G],$ namely $B\cap S,$ for which the lemma fails, which is in contradiction with our initial assumption.

$(b)$ In what follows we say $A$ codes $\xi$ (for $\xi \in X$), if $\eta_{\xi}^{1} \in A.$
\end{remark}

\begin{proof}

 Let $\lusim{S}$ be a $\mathbb{Q}/G-$name for $S.$  Also let $p_{0} \in H \cap \mathbb{Q}/G$ be such that  $p_{o} \vdash_{\mathbb{Q}/G}^{V[G]} \ulcorner \check{A} \subseteq \lusim{S} $ is countable$\urcorner.$
\begin{claim}
 For every $p \in \mathbb{Q}/G$ and every $\xi \in X \setminus supp(p)$ there is  $q \leq p$ in $\mathbb{Q}/G$ such that $\xi \in supp(q)$ and if $q(\xi)=\langle s_{\xi}, A_{\xi}, f_{\xi}  \rangle,$ then $l(s_{\xi})=2$ and $q \vdash_{\mathbb{Q}/G}^{V[G]} \ulcorner s_{\xi}(1) \notin \check{A} \urcorner.$
\end{claim}
\begin{proof}
Let $p$ and $\xi$ be as in the claim. First pick $\langle\langle\langle t_{\xi}(0)\rangle, A_{\xi}, f_{\xi} \rangle\rangle \in G,$ and then let $q=p ^{\frown} \langle \langle s_{\xi}, A_{\xi}, f_{\xi} \rangle  \rangle,$ where $s_{\xi}(0)=t_{\xi}(0)=\tau_{\xi}$ , $s_{\xi}(1)< \xi$ is large enough so that $s_{\xi}(1) \notin A,$ $sup(ran(f_{\xi}^{2}))<s_{\xi}(1)$ and $s_{\xi}(1)$ is inaccessible. Then $\pi( \langle \langle s_{\xi}, A_{\xi}, f_{\xi} \rangle  \rangle)=\langle\langle\langle t_{\xi}(0)\rangle, A_{\xi}, f_{\xi} \rangle\rangle \in G.$ On the other hand $\pi(p) \in G.$ Let $r \in G, r \leq \pi(p), \langle\langle\langle t_{\xi}(0)\rangle, A_{\xi}, f_{\xi} \rangle\rangle.$ Then $r \leq \pi(q),$ hence $\pi(q) \in G.$ This implies that $q \in \mathbb{Q}/G.$ Clearly $q$ satisfies the requirements of the Claim.
\end{proof}
It follows that the set
\begin{center}
 $D=\{p \in \mathbb{Q}/G: \forall \xi \in X\setminus supp(p)$ there exists $q \leq p$ as in the above Claim$   \}$
\end{center}
is dense open in $\mathbb{Q}/G.$ Let $p \in H \cap D.$ We can assume that $p \leq p_{0}.$ We show that $p \vdash_{\mathbb{Q}/G}^{V[G]} \ulcorner $if $\check{A}$ codes $\xi$ then $\xi \in supp(p)    \urcorner.$ To see this suppose that $\xi \in X \setminus supp(p).$ Thus by Claim 5.12 we can find $q \leq p$ in $\mathbb{Q}/G$ such that $\xi \in supp(q)$ and if $q(\xi)=\langle s_{\xi}, A_{\xi}, f_{\xi}  \rangle,$ then $l(s_{\xi})=2$ and $q \vdash_{\mathbb{Q}/G}^{V[G]} \ulcorner s_{\xi}(1) \notin \check{A} \urcorner.$  It then follows that $\sim p \vdash_{\mathbb{Q}/G}^{V[G]} \ulcorner s_{\xi}(1) \in \check{A} \urcorner.$ But then by the cone homogeneity of $\mathbb{Q}/G$  we have $p \vdash_{\mathbb{Q}/G}^{V[G]} \ulcorner s_{\xi}(1) \notin \check{A} \urcorner$ \footnote{If not, then for some $p'\leq p, p'\Vdash_{\mathbb{Q}/G}^{V[G]} \ulcorner s_{\xi}(1) \in \check{A} \urcorner.$ By cone homogeneity of $\mathbb{Q}/G$ we can find $q^*\leq q, p^*\leq p'$ and an isomorphism $\rho:(\mathbb{Q}/G)/ p^{*} \rightarrow (\mathbb{Q}/G)/ q^{*}.$ But then by standard forcing arguments and the fact that $q^* \vdash_{\mathbb{Q}/G}^{V[G]} \ulcorner s_{\xi}(1) \notin \check{A} \urcorner,$ we can conclude that $p^* \vdash_{\mathbb{Q}/G}^{V[G]} \ulcorner s_{\xi}(1) \notin \check{A} \urcorner,$ which is impossible, as $p^*\leq p'$ and $p'\Vdash_{\mathbb{Q}/G}^{V[G]} \ulcorner s_{\xi}(1) \in \check{A} \urcorner$.}. Hence $p \vdash_{\mathbb{Q}/G}^{V[G]} \ulcorner \check{A}$ does not code $\xi  \urcorner.$  This means that $p \vdash_{\mathbb{Q}/G}^{V[G]} \ulcorner \check{A} \subseteq \{s_{\xi}(1):\xi \in supp(p) \}=\{\eta_{\xi}^{1}:\xi \in supp(p)   \}  \urcorner.$ Lemma 5.10 follows by noting that $p \in H$ and since the Magidor iteration is used, the support of any condition is finite.
\end{proof}
Theorem 5.1 follows.
\end{proof}

The following theorem can be proved by combining the methods of the proofs of Theorems 3.1 and 5.1.

\begin{theorem}
 Suppose $GCH$ holds and $\kappa$ is the least singular cardinal of cofinality $\omega$ which is
 a limit of $\kappa-$many measurable cardinals. Also let   $V[G]$ and $V[H]$ be the models
 constructed in the proof of Theorem 5.1. Then
 there is a cardinal preserving, not adding a
 real generic extension $V[H][K]$ of $V[H]$ such that in $V[H][K]$ there exists a
 splitting $\lan S_{\sigma}:\sigma<\k \ran$ of $\k^+$ into sets of size $\k^+$ such that for every countable set $I\in V[G]$ and $\sigma<\k, |I\cap S_{\sigma}|<\aleph_0.$
\end{theorem}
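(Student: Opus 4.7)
The plan is to combine the two-step forcing template of Theorem 3.1 with the generic extensions $V[G] \subseteq V[H]$ from Theorem 5.1. In $V[H]$, define a forcing $\mathbb{P}$ whose conditions are pairs $\langle \tau, \langle s_\sigma : \sigma < \kappa\rangle\rangle$ with $\tau < \kappa^+$, $\langle s_\sigma : \sigma < \kappa\rangle$ a splitting of $\tau$, and $|I \cap s_\sigma| < \aleph_0$ for every countable $I \in V[G]$ and every $\sigma < \kappa$, ordered by end-extension exactly as in Theorem 3.1. Non-triviality comes from Lemma 5.10: the set $S = \{\eta_\xi^1 : \xi \in X\}$ has size $\kappa$ and finite intersection with every $V[G]$-countable set, and shifting and disjointifying $S$ as in Remark 3.2$(b)$ yields an initial condition of height $\kappa$, using that $V[G]$ is closed under shifts to see that each shifted piece still has finite intersection with every $V[G]$-countable set. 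The $\kappa^{++}$-c.c. is trivial since $|\mathbb{P}| \leq 2^\kappa = \kappa^+$, so once $<\kappa$-distributivity is established, $\mathbb{P}$ is cardinal preserving (with $\kappa^+$ surviving by distributivity) and adds no reals.

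The distributivity argument mirrors Theorem 3.1. Fix $\delta < \kappa$ regular, $p \in \mathbb{P}$, and a name $\lusim{g}$ with $p \vdash \ulcorner \lusim{g} : \delta \to \On \urcorner$. In $V[H]$ build an increasing continuous chain $\langle M_i : i \leq \delta\rangle$ of elementary submodels of $H(\theta)$ of size $\kappa$ satisfying the analogs of conditions $(1)$--$(7)$ from Theorem 3.1: in particular $\cf^V(M_{i+1} \cap \kappa^+) = \xi_i$ for some $\xi_i \in X$ (arranged via the stationarity in $V[H]$ of $\{\alpha < \kappa^+ : \cf^V(\alpha) \in X\}$), $\langle M_i \cap V : i \leq \delta\rangle \in V$ via the $\kappa^+$-c.c. of $\mathbb{Q}$, and $\langle M_i \cap V[G] : i \leq \delta\rangle \in V[G]$ via the $\kappa^+$-c.c. of $\mathbb{Q}/G$. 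Pick a $V$-cofinal sequence $\langle \eta^i_\alpha : \alpha < \xi_i\rangle$ in $M_{i+1} \cap \kappa^+$ above $M_i \cap \kappa^+$. Recursively define $p_j = \langle \tau_j, \langle s_{j,\sigma}\rangle\rangle$: at successor stages use height $\eta^j_{\eta_{\xi_j}^1}$, where $\eta_{\xi_j}^1$ is the second Prikry point of $\xi_j$ playing the role of the first Prikry element $\xi_j'$ in Theorem 3.1, add $f_j(\sigma)$ to $s_{j,\sigma}$ for $\sigma \in Z_j$ via a fixed bijection, and extend to decide $\lusim{g}(j)$; at limits take componentwise unions.

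The one substantively new step is verifying, at each limit $i$, that $p_i \in \mathbb{P}$: given a putative countable $I \in V[G]$ and $\sigma < \kappa$ with $|I \cap s_{i,\sigma}|$ infinite, define $\alpha(j) \in V[G]$ exactly as in Theorem 3.1, set $J = \{j < i : \eta_{\xi_j}^1 \leq \alpha(j)\}$, and show $J$ is finite. The Prikry-style thinning of Lemma 3.4 is no longer directly available, because the first Prikry point $\tau_{\xi_j}$ lies in $V[G]$ and may itself be $\leq \alpha(j)$, obstructing a straightforward shrinking of $A_{\xi_j}$ below $\alpha(j)$ while keeping $\pi$-compatibility with $G$. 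Instead the argument follows Lemma 5.10: working over $V[G]$ with $\mathbb{Q}/G$, for any $r \in \mathbb{Q}/G$ and any $j$ with $\xi_j \notin \supp(r)$, extend $r$ to $r' \leq r$ with $l(s_{\xi_j}^{r'}) = 2$ and $s_{\xi_j}^{r'}(1) > \alpha(j)$ (possible since the admissible values form a cofinal subset of $\xi_j$), then apply the cone homogeneity of $\mathbb{Q}/G$ from Lemma 5.9 exactly as in the footnote to Lemma 5.10 to conclude $r \vdash \ulcorner \eta_{\xi_j}^1 > \alpha(j) \urcorner$; hence $r \vdash \ulcorner J \subseteq \{j < i : \xi_j \in \supp(r)\} \urcorner$, a finite set. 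Therefore $J$ is finite in $V[H]$, and the argument of Theorem 3.1 produces $i_0 < i$ with $|I \cap s_{i_0,\sigma}|$ infinite, contradicting the inductive hypothesis. This adaptation of Lemma 3.4 via Lemma 5.9--5.10 is the main obstacle; once in hand, setting $S_\sigma = \bigcup\{s_\sigma : \langle \tau, \langle s_\sigma\rangle\rangle \in K\}$ for the $\mathbb{P}$-generic $K$ yields the required splitting of $\kappa^+$.
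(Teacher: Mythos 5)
Your overall plan coincides with the paper's: the paper's proof of Theorem 5.13 is only a two--sentence sketch instructing one to rerun the forcing of Theorem 3.1 over $V[H]$ with $V$ replaced by $V[G]$ and with $S=\{\eta^{1}_{\xi}:\xi\in X\}$ playing the role of $\bigcup_{\xi\in X}x_{\xi}$, and you correctly isolate the one genuinely new step, the analogue of Lemma 3.4. But your treatment of that step does not work. The claim that cone homogeneity yields $r\Vdash \ulcorner\eta^{1}_{\xi_j}>\alpha(j)\urcorner$ for \emph{every} $r\in\mathbb{Q}/G$ with $\xi_j\notin\supp(r)$ is false: such an $r$ typically has one extension whose second Prikry point at $\xi_j$ is $\le\alpha(j)$ and another whose second point is $>\alpha(j)$ (the admissible values form a co-bounded subset of $A^{r}_{\xi_j}\cap(\tau_{\xi_j},\xi_j)$, which in general meets $(\tau_{\xi_j},\alpha(j)]$), so $r$ simply does not decide the statement. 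The cone isomorphisms of Lemma 5.9 cannot rescue this, because they move the stems at coordinates in the common support and therefore change the value of the name $\eta^{1}_{\xi_j}$; the sentence ``$\eta^{1}_{\xi_j}>\alpha(j)$'' is not invariant under them, so the transfer step used in the footnote to Lemma 5.10 is unavailable (in Lemma 5.10 the argument is saved by the extra hypothesis $p_{0}\Vdash\ulcorner\check{A}\subseteq\lusim{S}\urcorner$, which has no counterpart here). Taken at face value your claim would give $1_{\mathbb{Q}/G}\Vdash\ulcorner J=\emptyset\urcorner$, which is clearly not forced.

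Moreover, the obstruction you cite against the direct thinning is not a real one, and the thinning \emph{is} the correct fix. You do not need to remove all of $\alpha(j)+1$ from $A_{\xi_j}$; you only need the \emph{second} point to land above $\alpha(j)$. Given $r\in\mathbb{Q}/G$, shrink $A^{r}_{\xi_j}$ to $A^{r}_{\xi_j}\setminus(\tau_{\xi_j},\alpha(j)]$ for each $j<i$ with $\xi_j\notin\supp(r)$ (note $\alpha(j)<\xi_j$, since $M_{j+1}\cap\kappa^{+}$ has uncountable cofinality while $I$ is countable, so the removed set is bounded and the result stays in $U_{\xi_j}$). The point $\tau_{\xi_j}$ is retained in the measure-one set, so by the Mathias-style characterization of the Magidor-iteration generic --- the same ``genericity arguments'' invoked in Remark 3.2$(b)$ --- the projection of the shrunken condition is still in $G$, hence the shrunken condition $r^{*}$ lies in $\mathbb{Q}/G$. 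Any extension of $r^{*}$ placing a second point at $\xi_j$ must take it from $A^{r^{*}}_{\xi_j}$ above $\tau_{\xi_j}$, hence above $\alpha(j)$; thus $r^{*}\Vdash\ulcorner J\subseteq\{j<i:\xi_j\in\supp(r)\}\urcorner$, and density gives that $J$ is finite in $V[H]$. With this replacement the rest of your write-up (non-triviality via Lemma 5.10 and the shifting of Remark 3.2$(b)$, the chain condition, the construction of the $M_i$'s, and the limit-stage verification) goes through as you describe.
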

\begin{proof}
Work over $V[H]$ and force the splitting $\lan S_{\sigma}:\sigma<\k \ran$ as in the proof of Theorem 3.1,
with $V, V[G]$ used there  replaced by $V[G], V[H]$ here respectively. The  role of the sequence $\bigcup_{\xi\in X}x_\xi$ in the proof of Theorem 3.1 is now played by the sequence $S=\{ \eta_\xi^1: \xi\in X\}$.
\end{proof}

\begin{corollary}
 Suppose $GCH$ holds and there exists a cardinal $\kappa$ which is of cofinality $\omega$ and is a limit of $\kappa-$many
 measurable cardinals. Then there is pair $(V_1, V_2)$ of models of $ZFC, V_1 \subseteq V_2$ such that:

$(a)$ $V_1$ and $V_2$ have the same cardinals and reals.

$(b)$ $\kappa$ is the first fixed
point of the $\aleph-$function in $V_1$ (and hence in $V_2$).

$(c)$ Adding $\kappa-$many Cohen reals over $V_2$ adds
$\kappa^{+}-$many Cohen reals over $V_1.$
\end{corollary}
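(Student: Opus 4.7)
The plan is to combine Theorem 5.1, Theorem 5.13 and Theorem 2.1, taking $V_1 = V[G]$ and $V_2 = V[H][K]$.

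First, since there exists a cardinal of countable cofinality which is a limit of that many measurable cardinals, we may pick $\kappa$ to be the least such cardinal; in particular the hypothesis of Theorem 5.1 is satisfied (note that the conclusion (b) of Corollary 5.14 pins down only that $\kappa$ becomes the first $\aleph$-fixed point in the resulting models, which is exactly what Theorem 5.1(c) gives). Applying Theorem 5.1, we obtain a pair $V[G] \subseteq V[H]$ of generic extensions of $V$ with the same cardinals and reals, with $\kappa$ the first fixed point of the $\aleph$-function in $V[G]$, and with a set $S$ of size $\kappa$ in $V[H]$ all of whose countable subsets from $V[G]$ are finite.

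Next, we work over $V[H]$ and apply Theorem 5.13: this produces a cardinal preserving, not adding a real, generic extension $V[H][K]$ of $V[H]$ in which there is a splitting $\langle S_\sigma : \sigma < \kappa \rangle$ of $\kappa^+$ into sets of size $\kappa^+$ such that $|I \cap S_\sigma| < \aleph_0$ for every countable $I \in V[G]$ and every $\sigma < \kappa$. We now set $V_1 = V[G]$ and $V_2 = V[H][K]$. Clauses (a) and (b) of Corollary 5.14 follow immediately from the corresponding properties of Theorems 5.1 and 5.13 (cardinals and reals are preserved at each stage, and $\kappa$ remains the first $\aleph$-fixed point).

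For (c), we simply verify the hypotheses of Theorem 2.1 applied to the pair $V_1 \subseteq V_2$ with $\lambda = \kappa^+$. Condition (a) is part of (b) above. Condition (b) of Theorem 2.1 is immediate: $\kappa$ has cofinality $\omega$ already in $V$, and since all cardinals up to $\kappa$ are preserved (in fact the relevant $\kappa_n$ can be read off from the Prikry sequences), there is a cofinal sequence $\langle \kappa_n : n < \omega \rangle$ of regular cardinals in $V_2$. Condition (c) of Theorem 2.1 is the $\lambda = \kappa^+$ case handled by Remark 2.2. Condition (d) of Theorem 2.1 is exactly what Theorem 5.13 provides. Therefore adding $\kappa$-many Cohen reals over $V_2$ produces $\kappa^+$-many Cohen reals over $V_1$, as required.

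The only conceptual point to be careful about is that the splitting given by Theorem 5.13 has the required ``finite intersection'' property against countable sets from $V[G] = V_1$ (and not merely from $V$), which is exactly the version of Theorem 5.13 that is formulated; this is what makes the application of Theorem 2.1 with the pair $(V_1, V_2)$ legitimate.
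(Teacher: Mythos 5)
Your proposal is correct and follows essentially the same route as the paper: set $V_1=V[G]$, $V_2=V[H][K]$ from Theorems 5.1 and 5.13, and feed Remark 2.2 (for condition (c) with $\lambda=\kappa^+$) and the splitting from Theorem 5.13 (for condition (d)) into Theorem 2.1. Your write-up merely makes explicit the verifications the paper leaves implicit; the one slightly loose phrase (``all cardinals up to $\kappa$ are preserved'') is harmless, since condition (b) of Theorem 2.1 only needs some cofinal $\omega$-sequence of regular cardinals below $\kappa$ in $V_2$, which is automatic.
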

\begin{proof}
Let $V_1=V[G]$ and $V_2=V[H][K],$ where $V[G], V[H][K]$ are as in Theorem 5.13. The result follows using Remark 2.2 and Theorem 5.13.
\end{proof}

\noindent{\large\bf Acknowledgement}

The authors would like to thank the referee of the paper for his/her many helpful suggestions.

Moti Gitik,
School of Mathematical Sciences, Tel Aviv University, Tel Aviv, Israel.

E-mail address: gitik@post.tau.ac.il

http://www.math.tau.ac.il/~gitik/

Mohammad Golshani,
School of Mathematics, Institute for Research in Fundamental Sciences (IPM), P.O. Box:
19395-5746, Tehran-Iran.

E-mail address: golshani.m@gmail.com

http://math.ipm.ac.ir/~golshani/

\end{document}